\newtheorem{definition}{Definition}[section]
  \newtheorem{theorem}[definition]{Theorem}
  \newtheorem{lemma}[definition]{Lemma}
  \newtheorem{corollary}[definition]{Corollary}
\newcommand{\lar}[1]{\mathlarger{\mathlarger{\mathlarger{#1}}}}
\title{Implicative models of intuitionistic set theory}
\author{Samuele Maschio}
\date{}
\begin{document}

\maketitle
\begin{abstract} In this paper we will show that using implicative algebras one can produce models of intuitionistic set theory generalizing both realizability and Heyting-valued models. This has as consequence that if one assumes the inaccessible cardinal axiom, then every topos which is obtained from a $\mathbf{Set}$-based tripos as the result of the tripos-to-topos construction hosts a model of intuitionistic set theory.
\end{abstract}
\section{Introduction} Implicative algebras were introduced by A.Miquel in \cite{miq1} in order to provide a common foundation for realizability and forcing. 

Given a complete Heyting algebra, one can define a tripos (see e.g.\ \cite{HJP}) of Heyting-valued predicates over $\mathbf{Set}$. Given a PCA one can obtain a realizability tripos, as shown e.g.\ in \cite{VOO08}. If one applies a construction called the tripos-to-topos construction (\cite{HJP}) to these triposes one obtains forcing toposes and realizability toposes, respectively.  In \cite{miq1} it is shown that both triposes are particular cases of a more general notion of tripos induced by an implicative algebra (which we call here an implicative tripos). In \cite{miq2} Miquel proved much more, namely that every set-based tripos is in fact (isomorphic to) an implicative tripos. Forcing toposes and realizability toposes host models of intuitionistic set theory $\mathbf{IZF}$ (and the same holds for a larger class of toposes as shows in \cite{MSt}), provided there exists an enough big strongly inaccessible cardinal. In the first case, the models of $\mathbf{IZF}$ are the so-called Heyting-valued models of set theory (see \cite{bell}) while in the second case they are Friedman/Rosolini/McCarty realizability models of $\mathbf{IZF}$ (see \cite{Friedman, Rosolini, McCarty}). If one takes a look to how these models are defined then it can notice some similarities. In this paper we show that these similarities comes from the fact that we can generalize the construction of Heyting-valued and realizability models of $\mathbf{IZF}$ to toposes coming from implicative triposes. We work in $\mathbf{ZFC}$ as metatheory.

\section{Intuitionistic set theory}
In the language of intuitionistic set theory $\mathbf{IZF}$ the only terms are variables and there are two binary predicate symbols: equality $=$ and membership $\in$.
As usual in the language of set theory $\forall x\in y\,\varphi$ is a shorhand for $\forall x(x\in y\rightarrow \varphi)$ and $\exists x\in y\,\varphi$ is a shorhand for $\exists x(x\in y\wedge \varphi)$, while $x\subseteq y$ is a shorthand for $\forall z\in x\,(z\in y)$.

We consider the following presentation of the axioms of $\mathbf{IZF}$.
\begin{enumerate}
\item[$\mathbf{Emp})$] $\exists x\forall y\in x\,\bot$
\item[$\mathbf{Ext})$] $\forall x\forall y\,(x\subseteq y\wedge y\subseteq x\rightarrow x=y)$ 
\item[$\mathbf{Pair})$] $\forall x\forall y\exists z\,(x\in z\wedge y\in z)$
\item[$\mathbf{Union})$] $\forall x\exists u\forall y\in x\forall z\in y\,(z\in u)$
\item[$\mathbf{Pow})$] $\forall x\exists z\forall y\,(y\subseteq x\rightarrow y\in z)$
\item[$\mathbf{Inf})$] $\exists u\mathbf{Inf}(u)$ where $\mathbf{Inf}(u)$ is the conjunction of  $\mathbf{Inf}_1(u):\equiv\exists x\in u\forall y\in x\,\bot$ and  $\mathbf{Inf}_2(u):\equiv\forall x\in u\exists y\in u(x\subseteq y\wedge x\in y\wedge \forall z\in y(z\in x \vee z=x))$
\item[$\mathbf{Sep}_{\varphi})$] $\forall w_1....\forall w_n\forall x\exists y\forall z\left(\forall z\in y\,(z\in x\wedge \varphi)\wedge \forall z\in x\,(\varphi\rightarrow z\in y)\right)$ for all formulas in context $\varphi[w_1,...w_n,x,z]$.
\item[$\mathbf{Col})$] $\forall w_1....\forall w_n\left(\forall x\in y \exists z\, \varphi \rightarrow \exists u\forall x\in y\exists z\in u\,\varphi\right)$ for all formulas in context $\varphi[w_1,...w_n,x,y,z]$.
\item[$\mathbf{\in\textrm{-}Ind}_{\varphi})$] $\forall w_1...\forall w_n(\forall x (\forall y\in x\,\varphi[y/x]\rightarrow \varphi)\rightarrow \forall x\,\varphi)$ for all formulas in context $\varphi[w_1,...,w_n,x]$ formula in context.
\end{enumerate}

\section{Implicative algebras and implicative triposes}
An implicative algebra is a $4$-tuple $\mathbb{A}=(A,\leq,\rightarrow,\Sigma)$ where 
\begin{enumerate}
\item $(A,\leq)$ is a complete lattice;
\item $\rightarrow:A\times A\rightarrow A$ is a function which is monotone in the second component and anti-monotone in the first component, and which satisfies the following condition 
$$a\rightarrow \bigwedge_{i\in I}b_i=\bigwedge_{i\in I}\left(a\rightarrow b_i
\right)$$
for every indexed family $(b_i)_{i\in I}$ of elements of $A$ and every $a\in A$;
\item $\Sigma\subseteq A$  is upward closed, it contains also $b$ as soon as it contains $a\rightarrow b$ and $a$, and it contains $\mathbf{K}:=\bigwedge_{a,b\in A}(a\rightarrow (b\rightarrow a))$ and $\mathbf{S}:=\bigwedge_{a,b\in A}((a\rightarrow (b\rightarrow c))\rightarrow ((a\rightarrow b)\rightarrow (a\rightarrow c)))$.
\end{enumerate}

\emph{Every complete Heyting algebra $(H,\leq)$ with Heyting implication $\rightarrow$ gives rise to an implicative algebra $(H,\leq,\rightarrow,\{\top\})$. Moreover, every total combinatory algebra $(R,\cdot)$ gives rise to an implicative algebra $(\mathcal{P}(R),\subseteq,\Rightarrow, \mathcal{P}(R)\setminus \{\emptyset\})$, where $A\Rightarrow B:=\{r\in R|\; r\cdot a \in B \textrm{ for every }a\in A\}$ for every $A,B\subseteq R$. Many other examples can be found in \cite{miq1}. In the case of a partial combinatory algebra $(R,\cdot)$, the $4$-uple $(\mathcal{P}(R),\subseteq,\Rightarrow, \mathcal{P}(R)\setminus \{\emptyset\})$ is not in general an implicative algebra, but a quasi-implicative algebra (see \cite{miq1}). However, there is a standard way to transform it into an implicative algebra in such a way that the tripos one obtains is equivalent to the realizability tripos built from $(R,\cdot)$.}\\

Closed $\lambda$-terms with constant parameters in $\mathbb{A}$ can be encoded in an implicative algebra as follows: $a^{\mathbb{A}}:=a$ for every $a\in A$, $(ts)^{\mathbb{A}}:=t^{\mathbb{A}}\cdot s^{\mathbb{A}}$ and $(\lambda x.t)^{\mathbb{A}}:=\bigwedge_{a\in A}\left(a\rightarrow (t[a/x])^{\mathbb{A}}\right)$ where application $\cdot$ is defined as follows for every $a,b\in A$:
$$a\cdot b:=\bigwedge\{x\in A|\,a\leq b\rightarrow x\}$$
Under this encoding, if we define $\mathbf{k}$ as $\lambda x.\lambda y.x$ and $\mathbf{s}$ as $\lambda x.\lambda y.\lambda z.xz(yz)$, one can show that $\mathbf{K}=\mathbf{k}^{\mathbb{A}}$ and $\mathbf{S}=\mathbf{s}^{\mathbb{A}}$.

Useful properties of the encoding of $\lambda$-terms in $\mathbb{A}$ are the following:
\begin{enumerate}
\item if $t$ $\beta$-reduces to $s$, then $t^{\mathbb{A}}\leq s^{\mathbb{A}}$;
\item if $t$ is a pure-$\lambda$ term with free variables $x_1,...,x_n$ and $a_1,...,a_n\in \Sigma$, then $(t[x_1:a_1,...,x_n:a_n])^{\mathbb{A}}\in \Sigma$\footnote{We denote with $t[x_1:a_1,...,x_n:a_n]$ the $\lambda$-term obtained by substituting the variables $x_1,...,x_n$ with $a_1,...,a_n$.}; in particular the encodings of closed pure $\lambda$-terms are elements of $\Sigma$.
\end{enumerate}
In what follows we will remove the superscript $\mathbb{A}$ from the encoding of $\lambda$-terms in order to lighten the notation.

Moreover for every $a,b\in A$ following \cite{miq1} we define:
$$a\times b:=\bigwedge_{x\in A}\left((a\rightarrow (b\rightarrow x))\rightarrow x\right)$$
$$a+ b:=\bigwedge_{x\in A}\left((a\rightarrow x)\rightarrow ((b\rightarrow x)\rightarrow x)\right)$$
and for every set indexed family $(a_{i})_{i\in I}$ we define
$$\lar{\forall} _{i\in I} a_i:=\bigwedge_{i\in I}a_{i}\qquad\lar{\exists} _{i\in I} a_i:=\bigwedge_{x\in A}\left(\bigwedge_{i\in I}\left(a_{i}\rightarrow x\right)\rightarrow x\right)$$

We introduce the following shorthands for some $\lambda$-terms: $\mathbf{k}:=\lambda x.\lambda y.x$, $\overline{\mathbf{k}}:=\lambda x.\lambda y.y$, $\mathbf{p}:=\lambda x.\lambda y.\lambda z.zxy$, $\mathbf{p}_1:=\lambda x.x\mathbf{k}$, $\mathbf{p}_2:=\lambda x.x\overline{\mathbf{k}}$, $\mathbf{j}_1:=\lambda x.\lambda z.\lambda w.zx$, $\mathbf{j}_2:=\lambda x.\lambda z.\lambda w.wx$, $\mathbf{e}:=\lambda x.\lambda z.zx$. Notice that (the encoding of) all of them belong to $\Sigma$.

If $\Gamma$ is a finite list of variable assignments $x_1:a_1,...,x_n:a_n$ with $a_1,...,a_n\in A$ and $t$ is a $\lambda$-term with parameters in $A$ and free variables among $x_1,..,x_n$, we write $\Gamma\vdash t:a$ as a shorthand for $t[\Gamma]^{\mathbb{A}}\leq a$ (where $t[\Gamma]$ is the result of the substitution corresponding to $\Gamma$ applied to $t$) and the following rules are sound (this is a little variation of the system of rules presented in \cite{miq1}).
\begin{framed}
{\small
$$\cfrac{x:a\in \Gamma}{\Gamma\vdash x:A}\;\qquad \cfrac{}{\Gamma\vdash a:a}\qquad \cfrac{\Gamma\vdash t:a\qquad  a\leq b}{\Gamma\vdash t:b}$$
$$\cfrac{\Gamma\vdash t:\bot}{\Gamma\vdash t:a}\qquad \cfrac{\Gamma\vdash t:a}{\Gamma\vdash t:\top}\qquad\cfrac{\Gamma\vdash t:a\rightarrow b\qquad \Gamma\vdash s:a}{\Gamma\vdash ts:b}\qquad \cfrac{\Gamma,x:a\vdash t:b }{\Gamma\vdash \lambda x.t:a\rightarrow b}$$
$$\cfrac{\Gamma\vdash t:a\qquad \Gamma\vdash s:b}{\Gamma\vdash \mathbf{p}ts: a\times b}\qquad \cfrac{\Gamma\vdash t:a\times b}{\Gamma\vdash\mathbf{p}_1t:a}\qquad  \cfrac{\Gamma\vdash t:a\times b}{\Gamma\vdash\mathbf{p}_2t:b}$$
$$\cfrac{\Gamma\vdash t:a}{\Gamma\vdash\mathbf{j}_1t:a+b}\;\;\cfrac{\Gamma\vdash t:b}{\Gamma\vdash\mathbf{j}_2t:a+b}\;\; \cfrac{\Gamma\vdash t:a+b\qquad \Gamma, x:a\vdash u:c\qquad\Gamma, y:b\vdash v:c }{\Gamma\vdash t(\lambda x.u)(\lambda y.v):c}$$
$$\cfrac{\Gamma\vdash t:a_i\,(\textrm{for all }i\in I)}{\Gamma\vdash t: \lar{\forall} _{i\in I}a_i}\qquad \cfrac{\Gamma\vdash t: \lar{\forall} _{i\in I}a_i}{\Gamma\vdash t:a_{\overline{i}}}\;\overline{i}\in I$$
$$\cfrac{\Gamma\vdash t:a_{\overline{i}}}{\Gamma\vdash 	\mathbf{e}t:\lar{\exists} _{i\in I}a_i }\qquad \cfrac{\Gamma\vdash t:\lar{\exists} _{i\in I}a_i\qquad \Gamma,x:a_i\vdash u:b\,(\textrm{ for all }i\in I)}{\Gamma\vdash t(\lambda x.u):b}$$
}
\end{framed}
As shown in \cite{miq1}, to every implicative algebra $\mathbb{A}$ can be associated a tripos (see \cite{HJP} or \cite{VOO08}) 
$$\mathbf{p}_{\mathbb{A}}:\mathbf{Set}^{op}\rightarrow \mathbf{Heyt}$$
by sending every set $I$ to the posetal reflection of the preordered set $(A^I,\vdash_{\Sigma[I]})$ where $\varphi\vdash_{\Sigma[I]}\psi$ if and only if $\bigwedge_{i\in I}(\varphi(i)\rightarrow \psi(i))\in \Sigma$ and every function $f:I\rightarrow J$ to the function induced by the pre-composition function $(-)\circ f:A^J\rightarrow A^I$.
Component-wise use of $\rightarrow$, $\times$ and $+$ defines a Heyting prealgebra structure on every preorder $(A^I,\vdash_{\Sigma[I]})$, which is preserved by precomposition.
$\lar{\exists}$ and $\lar{\forall}$ are used to produce left and right adjoints to reindexing maps satisfying Beck-Chevalley condition, while a generic predicate is given by (the equivalence class of) the identity function on $A$. 

A remarkable result in \cite{miq2} is the following
\begin{theorem}\label{teomiq}
Let $\mathbf{p}:\mathbf{Set}^{op}\rightarrow \mathbf{Heyt}$ be a tripos. Then, there exists an implicative algebra $\mathbb{A}$ such that $\mathbf{p}$ is isomorphic to $\mathbf{p}_{\mathbb{A}}$.
\end{theorem}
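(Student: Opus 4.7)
The plan is to reverse-engineer an implicative algebra from the tripos by exploiting its generic predicate. Since $\mathbf{p}$ is $\mathbf{Set}$-based, there exist a set $G$ and $\tau\in \mathbf{p}(G)$ such that every $\varphi\in \mathbf{p}(I)$ equals $\mathbf{p}(f)(\tau)$ (up to tripos equivalence) for some $f\colon I\to G$. The design principle is that the carrier $A$ of the desired $\mathbb{A}$ should represent $\tau$ via the identity map, since in any implicative tripos $\mathbf{p}_{\mathbb{A}}$ the generic predicate is tautologically $\mathrm{id}_A\in A^A$.

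As a first step I would extract operations on $G$. The Heyting implication of the tripos applied to the two pullbacks of $\tau$ along the projections $G\times G\to G$ yields a predicate in $\mathbf{p}(G\times G)$; by genericity it factors through $\tau$ via some $h\colon G\times G\to G$ (choosing such an $h$ uses the axiom of choice, which is available in the $\mathbf{ZFC}$ metatheory), and I would set $a\rightarrow b:=h(a,b)$. In the same way I would produce candidate infima of set-indexed families by pushing tripos-level universal quantification back through genericity, and declare $a\leq b$ iff the associated predicate in $\mathbf{p}(1)$ is top. At this stage the axioms only hold modulo tripos equivalence: $\leq$ is merely a preorder and the distributivity law $a\rightarrow \bigwedge_i b_i=\bigwedge_i(a\rightarrow b_i)$ holds only up to equivalence. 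Following Miquel, the second step is therefore to pass to a completion $A$ of $G$---for instance built from downward-closed subsets of $G$ in the preorder---on which $\rightarrow$ is extended so that the distributivity axiom becomes a genuine equation and antisymmetry holds. The separator $\Sigma\subseteq A$ is then defined as the upset of those $a$ whose associated tripos-predicate on the singleton is top; upward closure and closure under modus ponens follow from transitivity of tripos entailment, and $\mathbf{K},\mathbf{S}\in\Sigma$ follows from the soundness of combinatory reasoning in the tripos's intuitionistic internal language.

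Finally, the isomorphism $\mathbf{p}\cong \mathbf{p}_{\mathbb{A}}$ would be exhibited by sending each $f\colon I\to A$ to the tripos-predicate it represents (after collapsing back along the embedding into $G$); genericity gives essential surjectivity, while the explicit definitions of $\rightarrow$ and $\Sigma$ identify the preorder $\vdash_{\Sigma[I]}$ on $A^I$ with tripos entailment on $\mathbf{p}(I)$. I expect the main obstacle to lie in the second step: engineering the completion so that the extended implication is strictly right-distributive over all infima, while the completion still faithfully codes the original $\mathbf{p}$. This is reminiscent of completing a Heyting prealgebra into a complete Heyting algebra, but is more delicate because the extension must simultaneously carry the combinators $\mathbf{K}$ and $\mathbf{S}$ inside its separator; this appears to be where the substantive content of \cite{miq2} resides.
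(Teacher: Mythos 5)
The first thing to say is that the paper does not prove Theorem \ref{teomiq} at all: it is Miquel's completeness theorem, imported as a black box from \cite{miq2}, so there is no internal argument to measure your attempt against. Judged on its own terms, your outline is architecturally consistent with what is known of Miquel's proof: one does start from a generic predicate $\tau\in\mathbf{p}(G)$, one does use choice in the metatheory to extract operations on $G$ by factoring tripos-level connectives through $\tau$, the carrier of the eventual algebra is indeed a complete lattice built from subsets of $G$ rather than $G$ itself (so that arbitrary meets exist on the nose), and the separator is the class of elements whose associated predicate is top.

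Nevertheless there is a genuine gap, and you have located it yourself: everything you defer to ``the second step'' is precisely the content of the theorem. Three points in particular are not routine. First, passing from the preordered, up-to-equivalence structure on $G$ to an actual implicative algebra is not a generic completion: one must define $\rightarrow$ on the chosen complete lattice so that $a\rightarrow\bigwedge_{i}b_i=\bigwedge_{i}(a\rightarrow b_i)$ holds as a literal equality, and a downset completion of a quasi-implicative preorder does not automatically carry such an implication; the whole difficulty is in choosing the lattice and the extension of $\rightarrow$ together. Second, you define $\Sigma$ by testing predicates over the singleton, but the isomorphism requires that $\Sigma$ detect truth uniformly over every index set, i.e.\ that $\bigwedge_{i\in I}\varphi(i)\in\Sigma$ iff $[\varphi]=\top$ in $\mathbf{p}(I)$; this needs a separate argument through the right adjoint along $I\to 1$ and is exactly where the compatibility between your chosen infima and the tripos quantifiers must be verified. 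Third, the conclusion is an isomorphism of triposes, not a mere equivalence, so the comparison $A^I\to\mathbf{p}(I)$ must be shown order-reflecting as well as order-preserving and surjective on equivalence classes; your sketch only addresses surjectivity. As it stands the proposal is a plausible plan plus an accurate diagnosis of where the work lies, not a proof; the paper's own (legitimate) move is simply to cite \cite{miq2}.
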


Recall also (see e.g.\ \cite{VOO08}) that to every tripos $\mathbf{p}$ over $\mathbf{Set}$ is associated an elementary topos $\mathbf{Set}[\mathbf{p}]$ obtained by means of the so-called ``tripos-to-topos'' construction.

\section{Implicative-valued models of $\mathbf{IZF}$-$\mathbf{Col}$}
{\bf Let us assume a strongly inaccessible cardinal $\kappa$ exists and let $\mathbb{A}$ be a fixed implicative algebra such that $|A|<\kappa$.}

We define the following hierarchy of sets indexed by ordinals:
$$W_\alpha^{\mathbb{A}}:=\begin{cases} 
\emptyset\textrm{ if }\alpha=0\\
\mathsf{Part}(W_{\beta}^{\mathbb{A}},A)\textrm{ if }\alpha=\beta+1\\
\bigcup_{\beta<\alpha}W_{\beta}^{\mathbb{A}}\textrm{ if }\alpha\textrm{ is a limit ordinal}\\
\end{cases}$$
where $\mathsf{Part}(X,Y)$ denotes the set of partial functions from $X$ to $Y$.
We take $\mathbf{W}$ to be $W^{\mathbb{A}}_{\kappa}$.
Since $W_{\alpha}^{\mathbb{A}}\subseteq W_{\beta}^{\mathbb{A}}$ if $\alpha<\beta$, one can assign a rank in the hierarchy to every element of $\mathbf{W}$ in the obvious way.
In particular, we can define simultaneously, by recursion on rank, two functions $\in_\mathbf{W}, =_{\mathbf{W}}:\mathbf{W}\times \mathbf{W}\rightarrow A$:
\begin{enumerate}
\item $\alpha\in_{\mathbf{W}}\beta:=\lar{\exists} _{t\in \partial_0(\beta)}\left( \beta(t)\times t=_{\mathbf{W}}\alpha \right)$
\item $\alpha=_{\mathbf{W}}\beta:=\alpha\subseteq_{\mathbf{W}}\beta\times\beta\subseteq_{\mathbf{W}}\alpha $
where $\alpha\subseteq_{\mathbf{W}}\beta:=\lar{\forall} _{t\in \partial_0(\alpha)}\left(\alpha(t)\rightarrow t\in_\mathbf{W}\beta\right)$.
\end{enumerate}
We interpret the language of set theory in such a way that to every formula in context $\varphi[x_{1},...,x_n]$ we associate a function $$\left\|\varphi[x_{1},...,x_n]\right\|:\mathbf{W}^{n}\rightarrow A$$
by recursion on complexity of formulas as follows:
\begin{enumerate}
\item $\left\|x_i\in x_j\,[x_1,...,x_n]\right\|(\alpha_1,...,\alpha_n):\equiv \alpha_i\in_{\mathbf{W}}\alpha_j$
\item $\left\|x_i= x_j\,[x_1,...,x_n]\right\|(\alpha_1,...,\alpha_n):\equiv\alpha_i=_{\mathbf{W}}\alpha_j$
\item $\left\|\varphi\wedge \psi[\underline{x}]\right\|(\underline{\alpha}):\equiv \left\|\varphi[\underline{x}]\right\|(\underline{\alpha})\times \left\|\psi[\underline{x}]\right\|(\underline{\alpha})$
\item $\left\|\varphi\vee \psi[\underline{x}]\right\|(\underline{\alpha}):\equiv \left\|\varphi[\underline{x}]\right\|(\underline{\alpha})+ \left\|\psi[\underline{x}]\right\|(\underline{\alpha})$
\item $\left\|\varphi\rightarrow \psi[\underline{x}]\right\|(\underline{\alpha}):\equiv \left\|\varphi[\underline{x}]\right\|(\underline{\alpha})\rightarrow \left\|\psi[\underline{x}]\right\|(\underline{\alpha})$
\item $\left\|\exists y\,\varphi\,[\underline{x}]\right\|(\underline{\alpha}):\equiv \lar{\exists} _{\beta\in \mathbf{W}}\left(\left\|\varphi\,[\underline{x},y]\right\|(\underline{\alpha},\beta)\right)$
\item $\left\|\forall y\,\varphi\,[\underline{x}]\right\|(\underline{\alpha}):\equiv \lar{\forall} _{\beta\in \mathbf{W}}\left(\left\|\varphi\,[\underline{x},y]\right\|(\underline{\alpha},\beta)\right)$\footnote{In these two clauses, we can assume, without loss of generality, that $y$ is not a variable appearing in the context $[\underline{x}]$.}
\end{enumerate}
We write $\mathbf{W}\vDash \varphi\,[\underline{x}]$ if $\bigwedge_{\underline{\alpha}\in \mathbf{W}^{\ell(\underline{x})}}\left(\left\|\varphi\,[\underline{x}]\right\|(\underline{\alpha})\right)\in \Sigma$ when $[\underline{x}]$ is non-empty and for every statement $\varphi$ we write $\mathbf{W}\vDash \varphi$ if $\left\|\varphi\,[\,]\right\|\in \Sigma$, that is $\mathbf{W}\vDash \varphi\,[\underline{x}]$ means $\left\|\varphi[\underline{x}]\right\|$ is in the maximum class of $\mathbf{p}_{\mathbb{A}}(\mathbf{W}^{n})$ where $n$ is the lenght of $[\underline{x}]$. %while $\mathbf{W}\vDash \varphi$ means $\top\vdash_{\Sigma} \left\|\varphi\,[\,]\right\|$.

We will often write $\left\|\varphi\right\|$ instead of $\left\|\varphi[\,]\right\|$.\\

Notice that the definition of the interpretation we proposed above coincides with that of Heyting-valued models in the case in which $\mathbb{A}=(A,\leq,\rightarrow, \{\top\})$ where $(A,\leq)$ is a complete Heyting algebra with Heyting implication $\rightarrow$, while it coincides with the realizability interpretation mentioned in the introduction when $\mathbb{A}=(\mathcal{P}(R),\subseteq,\Rightarrow, \mathcal{P}(R)\setminus\{\emptyset\})$ with $(R,\cdot)$ a total combinatory algebra and $\Rightarrow$ is the usual implication between sets of realizers. The case of a partial combinatory algebra can be recovered by using the techniques illustrated in \cite{miq1}.

\subsection{Useful lemmas}

\begin{lemma}\label{not}
There exist $\mathbf{\rho}, \mathbf{j},\mathbf{\sigma},\mathbf{s}_1,\mathbf{s}_2,\mathbf{s}_3\in \Sigma$ such that 
\begin{enumerate}
\item $\mathbf{\rho}\leq \bigwedge_{\alpha\in \mathbf{W}}\left(\alpha=_{\mathbf{W}}\alpha\right)$
\item $\mathbf{j}\leq \bigwedge_{\alpha\in \mathbf{W}}\bigwedge_{u\in \partial_0(\alpha)}\left(\alpha(u)\rightarrow u\in_{\mathbf{W}}\alpha\right)$
\item $\mathbf{\sigma}\in \bigwedge_{\alpha,\beta\in \mathbf{W}}\left(\alpha=_{\mathbf{W}}\beta\rightarrow \beta=_{\mathbf{W}}\alpha\right)$
\item $\mathbf{s}_1\leq \bigwedge_{\alpha,\beta,\gamma\in \mathbf{W}}\left(\alpha=_{\mathbf{W}}\beta\times \gamma\in_{\mathbf{W}} \alpha\rightarrow \gamma\in_{\mathbf{W}}\beta\right)$
\item $\mathbf{s}_2\leq \bigwedge_{\alpha,\beta,\gamma\in \mathbf{W}}\left(\alpha=_{\mathbf{W}}\beta\times \alpha\in_{\mathbf{W}} \gamma\rightarrow \beta\in_{\mathbf{W}}\gamma\right)$
\item $\mathbf{s}_3\leq \bigwedge_{\alpha,\beta,\gamma\in \mathbf{W}}\left(\alpha=_{\mathbf{W}}\beta\times \gamma=_{\mathbf{W}} \alpha\rightarrow \gamma=_{\mathbf{W}}\beta\right)$
\end{enumerate}
\end{lemma}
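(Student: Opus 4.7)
The plan is to build the six realizers in order of dependency, using the typing rules displayed in the framed system and, when needed, a closed fixed-point combinator from pure $\lambda$-calculus, whose encoding in $\mathbb{A}$ automatically lies in $\Sigma$.

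Item 3 is immediate: take $\sigma:=\lambda x.\mathbf{p}(\mathbf{p}_2 x)(\mathbf{p}_1 x)$, which swaps the two conjuncts of $\alpha=_{\mathbf{W}}\beta$. For item 1, I would fix a standard fixed-point combinator $Y$ and set $\rho:=Y(\lambda r.\mathbf{p}\,M_r\,M_r)$ with $M_r:=\lambda x.\mathbf{e}(\mathbf{p} x\,r)$. The inequality $\rho\leq\alpha=_{\mathbf{W}}\alpha$ is then verified by transfinite induction on the rank of $\alpha$: given $t\in\partial_0(\alpha)$ and $x:\alpha(t)$, the inductive hypothesis applied to $t$ yields $\rho\leq t=_{\mathbf{W}}t$, so $\mathbf{p} x\rho$ realizes $\alpha(t)\times t=_{\mathbf{W}}t$, whence $\mathbf{e}(\mathbf{p} x\rho)$ realizes $t\in_{\mathbf{W}}\alpha$ by choosing $t$ itself as the existential witness; pairing two copies through the outer $\mathbf{p}$ matches the $\beta$-unfolding of $\rho$. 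Item 2 then follows by setting $\mathbf{j}:=\lambda x.\mathbf{e}(\mathbf{p} x\rho)$ and reading off the typing rules for $\mathbf{p}$ and $\mathbf{e}$.

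The substantive part is items 4--6. After unpacking the definitions one sees that the three claims are genuinely mutually recursive. Informally: $\mathbf{s}_1(\alpha,\beta,\gamma)$, after decomposing $\gamma\in_{\mathbf{W}}\alpha$, needs to compose $u=_{\mathbf{W}}t$ with $t=_{\mathbf{W}}\gamma$ for some $t\in\partial_0(\alpha)$ and $u\in\partial_0(\beta)$, an instance of $\mathbf{s}_3$; $\mathbf{s}_2(\alpha,\beta,\gamma)$ reduces to a single application of $\mathbf{s}_3(\alpha,\beta,u)$ with $u\in\partial_0(\gamma)$; and $\mathbf{s}_3(\alpha,\beta,\gamma)$, decomposed along the product structure of $=_{\mathbf{W}}$, reduces to $\mathbf{s}_1(\alpha,\beta,t)$ and to a $\sigma$-twisted $\mathbf{s}_1(\alpha,\gamma,s)$ with $t\in\partial_0(\gamma)$ and $s\in\partial_0(\beta)$. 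I would therefore construct the three realizers as the components of a single vector fixed point: write a closed $\lambda$-term $F$ that, given three self-references, produces three subterms implementing the above informal derivations, and set $(\mathbf{s}_1,\mathbf{s}_2,\mathbf{s}_3):=YF$ composed with appropriate projections. All three terms belong to $\Sigma$, being built from pure $\lambda$-terms together with the previously established $\rho$, $\sigma$ and $\mathbf{j}$.

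Correctness is then proved by a simultaneous transfinite induction on the multiset $\{\mathrm{rank}(\alpha),\mathrm{rank}(\beta),\mathrm{rank}(\gamma)\}$ under the standard well-founded multiset ordering: in every recursive call identified above, at least one of the three arguments is replaced by an element of a $\partial_0(\cdot)$ of strictly smaller rank, so the multiset strictly decreases. The main obstacle I expect is precisely this bookkeeping, together with arranging the fixed-point equation for $YF$ so that its $\beta$-unfolding sits on the correct side of $\leq$ for the typing rules to deliver the stated inequalities. This is the standard delicate point from Friedman/Rosolini/McCarty-style realizability of $\mathbf{IZF}$, reworked here inside the implicative-algebra formalism.
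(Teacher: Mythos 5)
Your proposal is correct and follows essentially the same route as the paper: $\rho$ as a fixed point verified by induction on rank, $\mathbf{j}$ and $\sigma$ read off directly (your $\sigma$ is the genuine swap $\lambda x.\mathbf{p}(\mathbf{p}_2x)(\mathbf{p}_1x)$, which is what is actually needed), and $\mathbf{s}_1,\mathbf{s}_2,\mathbf{s}_3$ obtained by closing a cycle of mutual dependencies with a fixed-point combinator. The only differences are organizational --- the paper composes the dependency chain into a single fixed point for $\mathbf{s}_1$ and then defines $\mathbf{s}_3$ and $\mathbf{s}_2$ from it, whereas you take a vector fixed point, and your explicit multiset-of-ranks induction spells out a termination argument the paper leaves implicit.
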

\begin{proof}
\begin{enumerate}
\item Let $\mathbf{\rho}$ be $\mathbf{y}f\in \Sigma$ where $f:=\lambda r.\mathbf{p}(\lambda x.\mathbf{e}(\mathbf{p}xr))(\lambda x.\mathbf{e}(\mathbf{p}xr))$ and $\mathbf{y}$ is a pure closed $\lambda$-term which is a fixed point operator (see e.g.\ \cite{VOO08}) for which $\mathbf{y}f$ $\beta$-reduces to $f(\mathbf{y}f)$. We claim that $\mathbf{\rho}\leq \alpha=_{\mathbf{W}}\alpha$ for every $\alpha\in \mathbf{W}$. Let $\alpha$ be an arbitrary element of $\mathbf{W}$ and let us assume that $\mathbf{\rho}\leq \beta=_{\mathbf{W}}\beta$ for every $\beta\in \mathbf{W}$ with rank in the hierarchy less than that of  $\alpha$. Then we can consider the following derivation tree in which we used only rules from the previous section.
{\small
$$\cfrac{\cfrac{\cfrac{\cfrac{\cfrac{\cfrac{}{x:\alpha(u)\vdash x:\alpha(u)\;(\textrm{for all }u\in \partial_{0}(\alpha))}\qquad \cfrac{}{x:\alpha(u)\vdash \mathbf{\rho}:u=_{\mathbf{W}}u\;(\textrm{for all }u\in \partial_{0}(\alpha))}}{x:\alpha(u)\vdash \mathbf{p}x\mathbf{\rho}: \alpha(u)\times u=_{\mathbf{W}}u \;(\textrm{for all }u\in \partial_{0}(\alpha))}}{x:\alpha(u)\vdash \mathbf{e}(\mathbf{p}x\mathbf{\rho}): u\in_{\mathbf{W}}\alpha \;(\textrm{for all }u\in \partial_{0}(\alpha))}}{\vdash\lambda x.\mathbf{e}(\mathbf{p}x\mathbf{\rho}):\alpha(u)\rightarrow u\in_{\mathbf{W}}\alpha \;(\textrm{for all }u\in \partial_{0}(\alpha)) }}{\vdash\lambda x.\mathbf{e}(\mathbf{p}x\mathbf{\rho}):\alpha\subseteq_{\mathbf{W}}\alpha}}{\vdash\mathbf{p}(\lambda x.\mathbf{e}(\mathbf{p}x\mathbf{\rho}))(\lambda x.\mathbf{e}(\mathbf{p}x\mathbf{\rho})):\alpha=_{\mathbf{W}}\alpha}$$
}
The last $\lambda$-term in the deduction tree is a $\beta$ reduction of $\mathbf{\rho}$. Thus we can conclude that $\mathbf{\rho}\leq \alpha=_{\mathbf{W}}\alpha$.

%Then $\mathbf{\rho}\leq t=_{\mathbf{W}}t$ for every $t\in \partial_{0}(\alpha)$ from which it follows that $\lambda x.\mathbf{\rho}\leq \alpha\subseteq_{\mathbf{W}}\alpha$, and thus that $\mathbf{y}f\leq f(\mathbf{y}f)\leq \lambda z.z(\lambda x.\mathbf{\rho})(\lambda x.\mathbf{\rho})\leq \alpha=_{\mathbf{W}}\alpha$.
\item Let $\mathbf{j}$ be defined as $\lambda x.\mathbf{e}(\mathbf{p}x\mathbf{\rho})\in \Sigma$. Assume $\alpha\in \mathbf{W}$ and $u\in \partial_{0}(\alpha)$. Then $x: \alpha(u)\vdash\mathbf{p}x\mathbf{\rho}: \alpha(u)\times u=_{\mathbf{W}}u$ . Hence, $x: \alpha(u)\vdash \mathbf{e}(\mathbf{p}x\mathbf{\rho}): u\in_{\mathbf{W}}\alpha$. Thus $$\mathbf{j}\leq \bigwedge_{\alpha\in \mathbf{W}}\bigwedge_{u\in \partial_0(\alpha)}\left(\alpha(u)\rightarrow u\in_{\mathbf{W}}\alpha\right)$$
\item $\mathbf{\sigma}$ can be just defined as $\lambda x.\mathbf{p}(\mathbf{p}_1x)(\mathbf{p}_2x) \in \Sigma$
\item[4,5,6.] Assume $\mathbf{s}_3$ to exist. 

%Observe that the following deduction (where we removed $\lambda$-terms) holds in the system:
%{\tiny
%$$\cfrac{\cfrac{\cfrac{}{\alpha=_{\mathbf{W}}\beta\times \gamma\in_{\mathbf{W}}\alpha\vdash  \gamma\in_{\mathbf{W}}\alpha }\qquad\cfrac{\cfrac{\cfrac{}{\Gamma\vdash \gamma(u)}\qquad \cfrac{\cfrac{}{\Gamma\vdash \alpha=_{\mathbf{W}}\beta\times u=_{\mathbf{W}}\alpha\rightarrow u=_{\mathbf{W}}\beta}\qquad\cfrac{}{\Gamma\vdash \alpha=_{\mathbf{W}}\beta\times u=_{\mathbf{W}}\alpha}}{\Gamma\vdash u\in_{\mathbf{W}} \beta}}{\Gamma\vdash \gamma(u)\times u=_{\mathbf{W}}\beta}}{\Gamma\vdash  \gamma\in_{\mathbf{W}}\beta}}{\alpha=_{\mathbf{W}}\beta\times \gamma\in_{\mathbf{W}}\alpha\vdash \gamma\in_{\mathbf{W}}\beta}}{\vdash\alpha=_{\mathbf{W}}\beta\times \gamma\in_{\mathbf{W}}\alpha\rightarrow \gamma\in_{\mathbf{W}}\beta}$$
%}
Let $\alpha,\beta,\gamma\in \mathbf{W}$ and let $\Gamma(u)$ be a shorthand for 
$$x:\alpha=_{\mathbf{W}}\beta\times \alpha\in_{\mathbf{W}}\gamma, y:\gamma(u)\times u=_{\mathbf{W}}\alpha$$ where $u$ an arbitrary element of the domain of $\gamma$. 
Since 
$$\vdash \mathbf{s}_3:\alpha=_{\mathbf{W}}\beta\times u=_{\mathbf{W}}\alpha\rightarrow u=_{\mathbf{W}}\beta$$
we obtain $\Gamma(u)\vdash \mathbf{p}(\mathbf{p}_1y)(\mathbf{s}_3(\mathbf{p}(\mathbf{p}_1x)(\mathbf{p}_2y))):\gamma(u)\times u=_{\mathbf{W}}\beta $ from which it follows that 
$$\Gamma(u)\vdash \mathbf{e}(\mathbf{p}(\mathbf{p}_1y)(\mathbf{s}_3(\mathbf{p}(\mathbf{p}_1x)(\mathbf{p}_2y)))):\beta\in_{\mathbf{W}}\gamma$$
Since $x:\alpha=_{\mathbf{W}}\beta\times \alpha\in_{\mathbf{W}}\gamma\vdash \mathbf{p}_2 x:\gamma\in_{\mathbf{W}}\alpha$, we get
 $$x:\alpha=_{\mathbf{W}}\beta\times \alpha\in_{\mathbf{W}}\gamma\vdash (\mathbf{p}_2x)(\lambda y.\mathbf{e}(\mathbf{p}(\mathbf{p}_1y)(\mathbf{s}_3(\mathbf{p}(\mathbf{p}_1x)(\mathbf{p}_2y))))):\beta\in_{\mathbf{W}}\gamma$$
 from which it follows that 
 $$\vdash\lambda x.(\mathbf{p}_2x)(\lambda y.(\mathbf{e}(\mathbf{p}(\mathbf{p}_1y)(\mathbf{s}_3(\mathbf{p}(\mathbf{p}_1x)(\mathbf{p}_2y)))))):\alpha=_{\mathbf{W}}\beta\times \alpha\in_{\mathbf{W}}\gamma\rightarrow \beta\in_{\mathbf{W}} \gamma$$
From this it follows that $\mathbf{s}_2$ can be defined as $\lambda x.(\mathbf{p}_2x)(\lambda y.(\mathbf{e}(\mathbf{p}(\mathbf{p}_1y)(\mathbf{s}_3(\mathbf{p}(\mathbf{p}_1x)(\mathbf{p}_2y))))))$.

Assume now $\mathbf{s}_2$ to exist and consider $\Gamma'(u)$ a shorthand for 
$$x:\alpha=_{\mathbf{W}}\beta\times \gamma\in_{\mathbf{W}}\alpha,y:\alpha(u)\times u=_{\mathbf{W}}\gamma$$ where $u$ is an arbitrary element of the domain of $\alpha$. We easily see that $\Gamma'(u)\vdash \mathbf{p}(\mathbf{p}_2y)((\mathbf{p_1}(\mathbf{p}_1x))(\mathbf{p}_1y)):u=_{\mathbf{W}}\gamma \times u\in_{\mathbf{W}}\beta $. Thus
$$\Gamma'(u)\vdash\mathbf{s}_2( \mathbf{p}(\mathbf{p}_2y)((\mathbf{p_1}(\mathbf{p}_1x))(\mathbf{p}_1y))):\gamma\in_{\mathbf{W}}\beta $$
Since $x:\alpha=_{\mathbf{W}}\beta\times \gamma\in_{\mathbf{W}}\alpha\vdash \mathbf{p}_2x:\gamma\in_{\mathbf{W}}\alpha$, we have that
$$x:\alpha=_{\mathbf{W}}\beta\times \gamma\in_{\mathbf{W}}\alpha\vdash (\mathbf{p}_2x)(\lambda y.\mathbf{s}_2( \mathbf{p}(\mathbf{p}_2y)((\mathbf{p_1}(\mathbf{p}_1x))(\mathbf{p}_1y)))):\gamma\in_{\mathbf{W}}\beta$$
from which it follows that 
$$\vdash \lambda x.(\mathbf{p}_2x)(\lambda y.\mathbf{s}_2( \mathbf{p}(\mathbf{p}_2y)((\mathbf{p_1}(\mathbf{p}_1x))(\mathbf{p}_1y)))):\alpha=_{\mathbf{W}}\beta\times \gamma\in_{\mathbf{W}}\alpha\rightarrow\gamma\in_{\mathbf{W}}\beta$$
%{\tiny
%$$\cfrac{\cfrac{\cfrac{}{\alpha=_{\mathbf{W}}\beta\times \gamma\in_{\mathbf{W}}\alpha\vdash  \gamma\in_{\mathbf{W}}\alpha}\qquad\cfrac{\cfrac{}{\Gamma\vdash u=_{\mathbf{W}}\gamma\times u\in_{\mathbf{W}} \beta\rightarrow \gamma\in_{\mathbf{W}} \beta}\qquad\cfrac{\cfrac{}{\Gamma\vdash u=_{\mathbf{W}}\gamma}\qquad\cfrac{\cfrac{\cfrac{\Gamma\vdash\alpha=_{\mathbf{W}}\beta}{\Gamma\vdash \alpha\subseteq_{\mathbf{W}}\beta}}{\Gamma\vdash \alpha(u)\rightarrow u\in_{\mathbf{W}} \beta}\qquad \cfrac{}{\Gamma\vdash \alpha(u)}}{\Gamma\vdash u\in_{\mathbf{W}}\beta}}{\Gamma\vdash u=_{\mathbf{W}}\gamma\times u\in_{\mathbf{W}} \beta}}{\Gamma\vdash\gamma\in_{\mathbf{W}}\beta}}{ \alpha=_{\mathbf{W}}\beta\times \gamma\in_{\mathbf{W}}\alpha\vdash  \gamma\in_{\mathbf{W}}\beta}}{\vdash \alpha=_{\mathbf{W}}\beta\times \gamma\in_{\mathbf{W}}\alpha\rightarrow  \gamma\in_{\mathbf{W}}\beta }$$
%}
%where $\Gamma'(u)$ is a shorthand for $\alpha=_{\mathbf{W}}\beta\times \gamma\in_{\mathbf{W}}\alpha,\alpha(u)\times u=_{\mathbf{W}}\gamma$ where $u$ is an aribitrary element of the domain of $\alpha$.
Thus $\mathbf{s}_1$ can be defined as $\lambda x.(\mathbf{p}_2x)(\lambda y.\mathbf{s}_2( \mathbf{p}(\mathbf{p}_2y)((\mathbf{p_1}(\mathbf{p}_1x))(\mathbf{p}_1y))))$.

Similarly, one can prove that if $\mathbf{s}_1$ is assumed to exist, then one can define $\mathbf{s}_3$ as a $\lambda$-term containing $\mathbf{s}_1$ as the unique parameter.

Thus it is sufficient to compose this mutual dependence to define by a fix point $\mathbf{y}g$ one among $\mathbf{s}_1$, $\mathbf{s}_2$ and $\mathbf{s}_3$ and then define the other two using that one exploiting the interdefinability. So if we define $\mathbf{s}_1$ as a fixpoint, we can then define $\mathbf{s}_3$ using $\mathbf{s}_1$ and then $\mathbf{s}_2$ using $\mathbf{s}_3$.

\end{enumerate}
\end{proof}

\begin{lemma}\label{subfor}For every formula in context $\varphi\,[\underline{x}]$ where $\underline{x}$ has length $n$, there exists $\mathbf{r}^{\varphi[\underline{x}]}\in \Sigma$ such that 
$$\mathbf{r}^{\varphi[\underline{x}]}\leq \bigwedge_{\underline{\alpha}\in \mathbf{W}^{n}}\bigwedge_{\underline{\beta}\in \mathbf{W}^{n}}\left( \underline{\alpha}=_{\mathbf{W}}\underline{\beta}\times \left\|\varphi\,[\underline{x}]\right\|(\underline{\alpha})\rightarrow \left\|\varphi\,[\underline{x}]\right\|(\underline{\beta})\right)$$
\end{lemma}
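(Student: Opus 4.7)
The plan is to proceed by induction on the complexity of $\varphi[\underline{x}]$, constructing the realizer $\mathbf{r}^{\varphi[\underline{x}]}$ uniformly as a closed $\lambda$-term whose only parameters are the combinators $\mathbf{\rho},\mathbf{\sigma},\mathbf{s}_1,\mathbf{s}_2,\mathbf{s}_3$ of Lemma \ref{not} together with the induction-hypothesis realizers for the immediate subformulas. A preliminary step is to unfold the compound equality $\underline{\alpha}=_{\mathbf{W}}\underline{\beta}$ as the product $(\alpha_1=_{\mathbf{W}}\beta_1)\times\cdots\times(\alpha_n=_{\mathbf{W}}\beta_n)$, so that the $i$-th component is extractable by a fixed composition of the projections $\mathbf{p}_1,\mathbf{p}_2$; this gives a uniform way to feed a single component of the equality tuple into the combinators from Lemma \ref{not}.

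For the atomic cases $x_i\in x_j$ and $x_i=x_j$, the required realizers follow directly from Lemma \ref{not}: for the membership atom one chains $\mathbf{s}_2$ (substituting the left argument of $\in_{\mathbf{W}}$) with $\mathbf{s}_1$ (substituting the right argument), using the projections to extract $\alpha_i=_{\mathbf{W}}\beta_i$ and $\alpha_j=_{\mathbf{W}}\beta_j$; for the equality atom one uses $\mathbf{s}_3$ twice, once to rewrite each side, with $\mathbf{\sigma}$ to orient the equalities as required by the statement of $\mathbf{s}_3$. For conjunction and disjunction, the realizer is built componentwise: in the first case, pair the realizers $\mathbf{r}^{\varphi}$ and $\mathbf{r}^{\psi}$ applied to the same equality tuple and the respective projection of the $\times$-witness; in the second case, perform a case split on the $+$-witness using the elimination rule for $+$, and re-inject with $\mathbf{j}_1$ or $\mathbf{j}_2$ after applying the appropriate induction hypothesis.

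The implicational and quantifier cases are the real obstacle and drive the design of the combinators. For $\varphi\rightarrow\psi$, given a realizer of $\|\varphi\|(\underline{\alpha})\rightarrow\|\psi\|(\underline{\alpha})$ and the equality tuple $\underline{\alpha}=_{\mathbf{W}}\underline{\beta}$, one constructs, for an arbitrary hypothetical realizer of $\|\varphi\|(\underline{\beta})$, first the symmetrized tuple $\underline{\beta}=_{\mathbf{W}}\underline{\alpha}$ using $\mathbf{\sigma}$ componentwise, then applies $\mathbf{r}^{\varphi[\underline{x}]}$ to transport $\|\varphi\|(\underline{\beta})$ back to $\|\varphi\|(\underline{\alpha})$, feeds the result to the given realizer, and finally applies $\mathbf{r}^{\psi[\underline{x}]}$ with the original tuple to transport $\|\psi\|(\underline{\alpha})$ forward to $\|\psi\|(\underline{\beta})$. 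For $\exists y\,\varphi$ and $\forall y\,\varphi$, one uses the elimination/introduction rules for $\lar{\exists}$ and $\lar{\forall}$ and invokes $\mathbf{r}^{\varphi[\underline{x},y]}$ on the extended equality tuple $(\underline{\alpha},\gamma)=_{\mathbf{W}}(\underline{\beta},\gamma)$, where the equality in the new component is supplied by the reflexivity realizer $\mathbf{\rho}$ from Lemma \ref{not}. The hardest part is the implication case, because it is the only one requiring contravariant transport and thus forcing the essential use of symmetry; once the correct bookkeeping of $\mathbf{\sigma}$ is in place, all remaining cases reduce to routine assembly of $\lambda$-terms in the calculus of the previous section, and each resulting $\mathbf{r}^{\varphi[\underline{x}]}$ is a closed pure $\lambda$-term applied to constants already in $\Sigma$, hence itself in $\Sigma$.
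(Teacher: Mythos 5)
Your proposal is correct and follows exactly the route the paper takes: the paper's own proof is the single sentence ``By induction on complexity of formulas by using the previous lemma for the atomic cases,'' and your plan is a faithful (and considerably more detailed) elaboration of that induction, with the right use of $\mathbf{s}_1,\mathbf{s}_2,\mathbf{s}_3,\mathbf{\sigma}$ for the atoms, $\mathbf{\sigma}$ for the contravariant transport in the implication case, and $\mathbf{\rho}$ to pad the equality tuple in the quantifier cases. Nothing to correct.
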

\begin{proof}
By induction on complexity of formulas by using the previous lemma for the atomic cases.
\end{proof}
Also the following lemma can easily been proved as a consequence of the previous result and of the rules in the previous section.
\begin{lemma} Let $\varphi[\underline{x}]$ and $\psi[\underline{x}]$ be formulas in context in the language of set theory and let $n$ be the lenght of $[\underline{x}]$,  
If $\varphi\vdash^{\underline{x}}_{\mathbf{IL}^=}\psi$, then $\left\|\varphi\,[\underline{x}]\right\|\vdash_{\Sigma[\mathbf{W}^{n}]}\left\|\psi\,[\underline{x}]\right\|$.
\end{lemma}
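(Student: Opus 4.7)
The plan is to perform induction on the derivation of $\varphi \vdash^{\underline{x}}_{\mathbf{IL}^=} \psi$, fixing a standard natural deduction presentation of $\mathbf{IL}^=$ with the usual introduction/elimination rules for $\wedge, \vee, \rightarrow, \bot, \forall, \exists$ together with reflexivity $\vdash x = x$ and Leibniz substitution $x = y \wedge \varphi[x/z] \rightarrow \varphi[y/z]$. For each rule, I exhibit a $\lambda$-term, uniform in $\underline{\alpha} \in \mathbf{W}^{n}$, whose encoding lies in $\Sigma$ and which in the typing system of the framed box derives the translation of the rule's conclusion from the translations of its premises. Chaining these together along the derivation tree produces a single $\lambda$-term whose encoding witnesses $\left\|\varphi[\underline{x}]\right\| \vdash_{\Sigma[\mathbf{W}^n]} \left\|\psi[\underline{x}]\right\|$.

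The propositional and quantifier cases are essentially automatic. By construction, $\left\|\varphi \wedge \psi\right\|$ is $\left\|\varphi\right\| \times \left\|\psi\right\|$, $\left\|\varphi \vee \psi\right\|$ is $\left\|\varphi\right\| + \left\|\psi\right\|$, $\left\|\varphi \rightarrow \psi\right\|$ is $\left\|\varphi\right\| \rightarrow \left\|\psi\right\|$, and $\left\|\forall y\,\varphi\right\|$, $\left\|\exists y\,\varphi\right\|$ are $\lar{\forall}_{\beta \in \mathbf{W}}$ and $\lar{\exists}_{\beta \in \mathbf{W}}$ of the interpretations, respectively. So the framed rules for $\times, +, \rightarrow, \lar{\forall}, \lar{\exists}$ simulate verbatim the corresponding natural deduction rules. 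Since the relevant combinators $\mathbf{p}, \mathbf{p}_1, \mathbf{p}_2, \mathbf{j}_1, \mathbf{j}_2, \mathbf{e}, \mathbf{k}, \overline{\mathbf{k}}, \mathbf{s}$ are encodings of closed pure $\lambda$-terms, their values lie in $\Sigma$, and upward closure of $\Sigma$ together with modus ponens guarantee that the realizer produced at each step is again in $\Sigma$.

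The atomic equality rules are where the preceding lemmas do the work: reflexivity $\vdash x = x$ is witnessed uniformly by $\rho$ from Lemma \ref{not}, while Leibniz substitution on a designated coordinate is exactly the content of Lemma \ref{subfor}, after combining the substitution term $\mathbf{r}^{\varphi[\underline{x}]}$ with $\rho$ on the remaining coordinates to extend single-variable substitution to the full tuple. Symmetry of $=_{\mathbf{W}}$ is realized by $\sigma$, and transitivity follows from reflexivity and substitution (or equivalently from $\mathbf{s}_3$). The ex falso rule is handled by the fourth typing rule of the framed box.

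The only real care required, rather than a genuine obstacle, is bookkeeping in the quantifier cases: in applying the framed rule for $\lar{\forall}$-introduction (resp.\ $\lar{\exists}$-elimination), the parameter indexing the meet (resp.\ join) must range over all of $\mathbf{W}$ uniformly in the realizer, which is precisely the content of the eigenvariable side condition in the $\forall$-introduction ($\exists$-elimination) rule of $\mathbf{IL}^=$. Granting this correspondence, the induction goes through and produces the desired entailment at every step.
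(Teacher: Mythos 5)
Your proposal is correct and follows essentially the route the paper intends: the paper's proof is the single remark that the lemma "can easily be proved as a consequence of the previous result and of the rules in the previous section," i.e.\ precisely your induction on derivations, simulating each rule of $\mathbf{IL}^=$ by the corresponding framed typing rule and discharging the equality axioms via Lemma \ref{not} and Lemma \ref{subfor}. Your treatment of the eigenvariable condition as uniformity of the realizer over the index set is the right way to make the quantifier cases precise.
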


\begin{lemma}\label{rest}If $[\underline{x}]$ has lenght $n$, then
$$\left\|\exists z\in y\,\varphi\,[\underline{x},y]\right\|\equiv_{\Sigma[\mathbf{W}^{n+1}]}\Lambda \underline{\alpha}.\Lambda \beta.\lar{\exists} _{u\in \partial_{0}(\beta)}\left(\beta(u)\times \left\|\varphi\,[\underline{x},y,z]\right\|(\underline{\alpha},\beta,u) \right)\footnote{We use the notation $\Lambda \alpha.f(\alpha)$ to denote the function sending each $\alpha$ in the domain to $f(\alpha)$.}$$
$$\left\|\forall z\in y\,\varphi\,[\underline{x},y]\right\|(\underline{\alpha},\beta)\equiv_{\Sigma[\mathbf{W}^{n+1}]}\Lambda \underline{\alpha}.\Lambda \beta.\lar{\forall} _{u\in \partial_{0}(\beta)}\left(\beta(u)\rightarrow \left\|\varphi\,[\underline{x},y,z]\right\|(\underline{\alpha},\beta,u) \right)$$
\end{lemma}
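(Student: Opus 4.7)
The plan is to unfold both sides and exhibit, for each of the two equivalences, closed $\lambda$-terms realizing the two implications uniformly in $(\underline{\alpha},\beta)$. No induction on $\varphi$ is needed; all ingredients are already available from Lemmas \ref{not} and \ref{subfor} together with the typing rules of the previous section.

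For the existential case, I would first unfold
$$\|\exists z(z\in y \wedge \varphi)\|(\underline{\alpha},\beta) = \lar{\exists}_{\gamma \in \mathbf{W}}\bigl(\lar{\exists}_{u \in \partial_0(\beta)}(\beta(u)\times u=_{\mathbf{W}}\gamma) \times \|\varphi\|(\underline{\alpha},\beta,\gamma)\bigr).$$
The $(\Leftarrow)$ direction is the easier one: from a realizer of $\beta(u)\times \|\varphi\|(\underline{\alpha},\beta,u)$ at some $u\in\partial_0(\beta)$, I apply $\mathbf{j}$ from Lemma \ref{not} to the first projection to obtain $u\in_{\mathbf{W}}\beta$, re-pair with $\|\varphi\|$ using $\mathbf{p}$, and then wrap with $\mathbf{e}$ to witness the outer $\mathbf{W}$-existential at $\gamma:=u$. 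For the $(\Rightarrow)$ direction I would consume the outer $\lar{\exists}$ at some $\gamma$ and the inner one at some $u\in\partial_0(\beta)$, producing realizers of $\beta(u)$, $u=_{\mathbf{W}}\gamma$, and $\|\varphi\|(\underline{\alpha},\beta,\gamma)$; I would then symmetrize the equality via $\mathbf{\sigma}$, pad with $\mathbf{\rho}$ in the $\underline{\alpha}$- and $\beta$-coordinates, and apply the substitution realizer $\mathbf{r}^{\varphi[\underline{x},y,z]}$ from Lemma \ref{subfor} to transport $\|\varphi\|(\underline{\alpha},\beta,\gamma)$ to $\|\varphi\|(\underline{\alpha},\beta,u)$; pairing with $\beta(u)$ and wrapping with $\mathbf{e}$ then yields the target $\partial_0(\beta)$-existential at $u$.

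The universal equivalence is dual, with the easy and hard directions swapped. For $(\leq)$ I would use $\mathbf{j}$ to pass from $\beta(u)$ to $u\in_{\mathbf{W}}\beta$ and instantiate the given $\mathbf{W}$-universal at $\gamma:=u$. For $(\geq)$ I would fix an arbitrary $\gamma\in\mathbf{W}$ and a realizer of $\gamma\in_{\mathbf{W}}\beta$, unpack the inner $\lar{\exists}$ over $\partial_0(\beta)$ to obtain a witnessing $u$ together with $\beta(u)$ and $u=_{\mathbf{W}}\gamma$, apply the bounded-universal hypothesis at $u$ to get $\|\varphi\|(u)$, and transport to $\|\varphi\|(\gamma)$ via $\mathbf{r}^{\varphi}$ (with $\mathbf{\rho}$ in the unchanged coordinates and $u=_{\mathbf{W}}\gamma$ used directly, so no $\mathbf{\sigma}$ is needed this time).

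The main obstacle, such as it is, lies in the bookkeeping required to assemble the product equality $\underline{\alpha}=_{\mathbf{W}}\underline{\alpha}\times\beta=_{\mathbf{W}}\beta\times u=_{\mathbf{W}}\gamma$ (or its flipped form) consumed by $\mathbf{r}^{\varphi}$ from the component realizers coming out of Lemma \ref{not}; this is routine once a concrete encoding of $n$-tuples via iterated pairs has been fixed. Everything else is a direct application of the framed typing system, and since each subterm used lies in $\Sigma$, so does each of the resulting closed $\lambda$-terms, yielding the $\equiv_{\Sigma[\mathbf{W}^{n+1}]}$ claimed.
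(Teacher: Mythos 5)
Your proposal is correct and follows essentially the same route as the paper's proof: unfold the interpretation, use $\mathbf{\sigma}$, $\mathbf{\rho}$ and the substitution realizer $\mathbf{r}^{\varphi}$ of Lemma \ref{subfor} to transport $\left\|\varphi\right\|$ along the witnessing equality $u=_{\mathbf{W}}\gamma$ in the hard direction, and (in effect) $\mathbf{j}$ together with existential introduction in the easy direction. The only differences are cosmetic — the paper inlines $\mathbf{e}(\mathbf{p}(-)\mathbf{\rho})$ where you invoke $\mathbf{j}$, treats only the existential case with empty $\underline{x}$ explicitly, and leaves the universal case to the reader exactly as you dualize it.
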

\begin{proof}
We consider the case of the existential quantifier and we leave the analogous proof of the universal case to the reader. We also restrict to the case in which $\underline{x}$ is empty. The general case is analogous, but only heavier in notation.
By definition of the interpretation we have that 
$\left\|\exists z\in y\,\varphi\,[y]\right\|(\beta)$ is $$\eta:=\lar{\exists} _{\gamma \in \mathbf{W}}\left(\lar{\exists} _{u\in \partial_{0}(\beta)}(\beta(u)\times u=_{\mathbf{W}}\gamma)\times \left\|\varphi\,[y,z]\right\|(\beta,\gamma)\right)$$
We denote with $\eta_{1}(\gamma)$ the scope of the quantifier $\lar{\exists} _{\gamma \in \mathbf{W}}$, while we denote with $\eta_2(u,\gamma)$ the scope of the quantifier $\lar{\exists} _{u\in \partial_{0}(\beta)}$.
It is immediate to check that the following sequent holds for every $\beta,\gamma\in \mathbf{W}$ and every $u\in \partial_0(\beta)$: 
$$x:\eta, y:\eta_1(\gamma),z:\eta_2(u,\gamma)\vdash \mathbf{l}:(\beta=_{\mathbf{W}}\beta\times \gamma=_{\mathbf{W}}u)\times \left\|\varphi\,[y,z]\right\|(\beta,\gamma) )$$
where $\mathbf{l}:=\mathbf{p}(\mathbf{p}\rho(\sigma(\mathbf{p}_2z)))(\mathbf{p}_2y)$.
With the notation from Lemma \ref{subfor}, we can conclude that $x:\eta, y:\eta_1(\gamma),z:\eta_2(u,\gamma)\vdash \mathbf{r}^{\varphi[y,z]}\mathbf{l}:\left\|\varphi\,[y,z]\right\|(\beta,u) $. Thus $x:\eta, y:\eta_1(\gamma),z:\eta_2(u,\gamma)\vdash \mathbf{p}(\mathbf{p}_1z)(\mathbf{r}^{\varphi[y,z]}\mathbf{l}):\beta(u)\times\left\|\varphi\,[y,z]\right\|(\beta,u) $.
Hence
$$x:\eta, y:\eta_1(\gamma),z:\eta_2(u,\gamma)\vdash \mathbf{e}(\mathbf{p}(\mathbf{p}_1z)(\mathbf{r}^{\varphi[y,z]}\mathbf{l})):\exists_{w\in \partial_{0}}(\beta(w)\times\left\|\varphi\,[y,z]\right\|(\beta,w))$$
Using the rules of elimination of existential quantification, one can conclude that 
$$\eta\rightarrow \exists_{w\in \partial_{0}}(\beta(w)\times\left\|\varphi\,[y,z]\right\|(\beta,w))\geq \mathbf{l}'\in \Sigma$$
where $\mathbf{l}':=\lambda x.x\lambda y.(\mathbf{p}_1y)(\lambda z.\mathbf{e}(\mathbf{p}(\mathbf{p}_1z)(\mathbf{r}^{\varphi[y,z]}\mathbf{l})))$.

It is easier to show that 
$$ \exists_{w\in \partial_{0}}(\beta(w)\times\left\|\varphi\,[y,z]\right\|(\beta,w))\rightarrow \eta \geq \lambda x.x\lambda y.\mathbf{p}(\mathbf{p}(\mathbf{p}_1y)\rho)(\mathbf{p}_2y)\in \Sigma$$
 One can in fact write a deduction tree in which the existential quantifiers of the consequent are both witnessed by a $w\in \partial_{0}(\beta)$ for which $\beta(w)\times\left\|\varphi\,[y,z]\right\|(\beta,w)$ is assumed to hold.

\end{proof}
\begin{corollary}\label{sub} $\left\|x\subseteq y\,[x,y]\right\|\equiv_{\Sigma[\mathbf{W}^2]}\Lambda \alpha.\Lambda \beta.\left(\alpha\subseteq_{\mathbf{W}}\beta\right)$ for every $\alpha,\beta\in \mathbf{W}$.
\end{corollary}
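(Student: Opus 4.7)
The plan is to unfold the shorthand $x\subseteq y$ as $\forall z\in x\,(z\in y)$ and then apply the universal-bounded-quantifier clause of Lemma \ref{rest}.

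First, I would write by definition
$$\left\|x\subseteq y\,[x,y]\right\|(\alpha,\beta)\equiv\left\|\forall z\in x\,(z\in y)\,[x,y]\right\|(\alpha,\beta).$$
Then I would invoke the universal half of Lemma \ref{rest} (with the roles of $y$ and the bounding variable $x$ swapped; the lemma is stated for $\forall z\in y\,\varphi[\underline{x},y]$, so I apply it to the context $[y_{\text{new}}:=x]$ and formula $z\in y$) to obtain, up to $\Sigma[\mathbf{W}^2]$-equivalence,
$$\left\|\forall z\in x\,(z\in y)\,[x,y]\right\|\;\equiv_{\Sigma[\mathbf{W}^2]}\;\Lambda\alpha.\Lambda\beta.\lar{\forall}_{u\in\partial_0(\alpha)}\bigl(\alpha(u)\rightarrow \left\|z\in y\,[x,y,z]\right\|(\alpha,\beta,u)\bigr).$$

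Next, by the atomic clause of the interpretation, $\left\|z\in y\,[x,y,z]\right\|(\alpha,\beta,u)=u\in_{\mathbf{W}}\beta$, so the right-hand side becomes
$$\Lambda\alpha.\Lambda\beta.\lar{\forall}_{u\in\partial_0(\alpha)}\bigl(\alpha(u)\rightarrow u\in_{\mathbf{W}}\beta\bigr),$$
which is exactly $\Lambda\alpha.\Lambda\beta.(\alpha\subseteq_{\mathbf{W}}\beta)$ by the definition of $\subseteq_{\mathbf{W}}$ given in the definition of $=_{\mathbf{W}}$.

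There is no real obstacle here: the entire content is absorbed into the previous lemma. The only mild care is bookkeeping in the application of Lemma \ref{rest}, since the bounded quantifier in our formula is $\forall z\in x$ rather than $\forall z\in y$; but since $x$ plays in our formula precisely the syntactic role that $y$ plays in the statement of Lemma \ref{rest}, a straightforward renaming of bound and free variables makes the lemma applicable verbatim.
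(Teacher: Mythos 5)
Your proof is correct and is exactly the intended argument: the paper states Corollary \ref{sub} without proof precisely because it follows immediately from the universal clause of Lemma \ref{rest} applied to the unfolding of $x\subseteq y$ as $\forall z\in x\,(z\in y)$, together with the atomic clause for $\in$ and the definition of $\alpha\subseteq_{\mathbf{W}}\beta$. The variable-renaming bookkeeping you flag is handled correctly.
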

%\begin{proof}
%$$\left\|x\subseteq y\,[x,y]\right\|(\alpha,\beta)=\left\|\forall z\in x\,(z\in y),[x,y]\right\|(\alpha,\beta)=$$
%$$\lar{\forall} _{u\in \partial_{0}(\alpha)}\left(\alpha(u)\rightarrow u\in_{\mathbf{W}}\beta\right)=\alpha\subseteq_{\mathbf{W}}\beta$$
%\end{proof}
%\newpage
%indebolimento
%sostituzione
%scambio

\subsection{Validity of axioms}
\subsubsection{Empty-set}
Thanks to Lemma \ref{rest} we know that
$$\left\|\mathbf{Emp}\right\|\equiv_{\Sigma}\lar{\exists} _{\alpha\in \mathbf{W}}\lar{\forall} _{u\in \partial_{0}(\alpha)}\left(\alpha(u)\rightarrow \bot\right)$$
Consider $\emptyset\in \mathbf{W}$. Then $\lar{\forall} _{u\in \partial_{0}(\emptyset)}\left(\emptyset(u)\rightarrow \bot\right)=\bigwedge \emptyset=\top$.
This entails that $\mathbf{e}\top\leq \lar{\exists} _{\alpha\in \mathbf{W}}\lar{\forall} _{u\in \partial_{0}(\alpha)}\left(\alpha(u)\rightarrow \bot\right)$. Since $\mathbf{e}\top\in \Sigma$, we can conclude that $\mathbf{W}\vDash \mathbf{Emp}$.
\subsubsection{Extensionality}
Thanks to Corollary \ref{sub} we know that
$$\left\|\mathbf{Ext}\right\|\equiv_{\Sigma}\lar{\forall} _{\alpha\in \mathbf{W}}\lar{\forall} _{\beta\in \mathbf{W}}\left(\alpha\subseteq_{\mathbf{W}}\beta \times \beta\subseteq_{\mathbf{W}}\alpha\rightarrow \alpha=_{\mathbf{W}}\beta\right)\geq \lambda x.x \in \Sigma$$
Thus, $\mathbf{W}\vDash \mathbf{Ext}$.
\subsubsection{Pair}
$$\left\|\mathbf{Pair}\right\|\equiv_{\Sigma}\lar{\forall} _{\alpha\in \mathbf{W}}\lar{\forall} _{\beta\in \mathbf{W}}\lar{\exists} _{\gamma\in \mathbf{W}}\left(\alpha\in_{\mathbf{W}}\gamma \times \beta\in_{\mathbf{W}}\gamma\right)$$
Let us consider arbitrary $\alpha,\beta\in \mathbf{W}$ and the partial function $\eta_{\alpha,\beta}\in \mathbf{W}$ defined as follows: $\partial_{0}(\eta_{\alpha,\beta})=\{\alpha, \beta\}$ and $\eta_{\alpha,\beta}(u)=\top$ for every $u$ in the domain. By Lemma \ref{not}, $\vdash\mathbf{\rho}: \alpha=_{\mathbf{W}}\alpha$, from which it follows that 
$$\vdash\mathbf{q}:=\mathbf{e}(\mathbf{p}\top\rho): \exists_{t\in \{\alpha,\beta\}}\left(\top\times t=_{\mathbf{W}}\alpha\right)=\alpha\in_{\mathbf{W}}\eta_{\alpha,\beta}$$ In the same way, we can show that $\vdash\mathbf{q}: \beta\in_{\mathbf{W}}\eta_{\alpha,\beta}$. As a consequence 
$$\vdash\mathbf{q}':=\mathbf{p}\mathbf{q}\mathbf{q}: \alpha\in_{\mathbf{W}}\eta_{\alpha,\beta}\times\beta\in_{\mathbf{W}}\eta_{\alpha,\beta}$$ Thus,  $\vdash\mathbf{e}\mathbf{q}': \lar{\exists} _{\gamma\in \mathbf{W}}\left(\alpha\in_{\mathbf{W}}\gamma \times \beta\in_{\mathbf{W}}\gamma\right)$.  Since $\mathbf{e}\mathbf{q}'$ does not depend on $\alpha$ and $\beta$ and belongs to $\Sigma$,  
$$\vdash\mathbf{e}\mathbf{q}': \lar{\forall} _{\alpha\in \mathbf{W}}\lar{\forall} _{\beta\in \mathbf{W}}\lar{\exists} _{\gamma\in \mathbf{W}}\left(\alpha\in_{\mathbf{W}}\gamma \times \beta\in_{\mathbf{W}}\gamma\right)$$
Hence $\mathbf{W}\vDash \mathbf{Pair}$.

\subsubsection{Union}
Thanks to Lemma \ref{rest} we know that
$$\left\|\mathbf{Union}\right\|\equiv_{\Sigma}\lar{\forall} _{\alpha\in \mathbf{W}}\lar{\exists} _{\beta\in \mathbf{W}}\lar{\forall} _{u\in \partial_{0}(\alpha)}\left(\alpha(u)\rightarrow\lar{\forall} _{w\in \partial_{0}(u)}\left(u(w)\rightarrow w\in_{\mathbf{W}}\beta\right)\right)$$
Let us fix now an arbitrary $\alpha\in \mathbf{W}$ and let us define $\zeta_{\alpha}\in \mathbf{W}$ as follows. The domain of $\zeta_{\alpha}$ is $\bigcup_{u\in \partial_{0}(\alpha)}\partial_{0}(u)$ and for every element of such domain $\zeta_{\alpha}(u)=\top$.

Let $w\in \bigcup_{u\in \partial_{0}(\alpha)}\partial_{0}(u)$, then $\vdash\mathbf{p}\top \mathbf{\rho}: \top\times w=_{\mathbf{W}}w$ (we are using Lemma \ref{not}), from which it follows that $\vdash\mathbf{e}(\mathbf{p}\top \mathbf{r})): w\in_{\mathbf{W}}\zeta_{\alpha}$. From this it follows that 
$$\vdash\lambda v'.\lambda v.\mathbf{e}(\mathbf{p}\top \mathbf{r})): \lar{\forall} _{u\in \partial_{0}(\alpha)}\left(\alpha(u)\rightarrow\lar{\forall} _{w\in \partial_{0}(u)}\left(u(w)\rightarrow w\in_{\mathbf{W}}\zeta_{\alpha}\right)\right)$$
and thus that 
$$\vdash\mathbf{e}(\lambda v'.\lambda v.\mathbf{e}(\mathbf{p}\top \mathbf{r}))):  \lar{\exists} _{\beta\in \mathbf{W}}\lar{\forall} _{u\in \partial_{0}(\alpha)}\left(\alpha(u)\rightarrow\lar{\forall} _{w\in \partial_{0}(u)}\left(u(w)\rightarrow w\in_{\mathbf{W}}\beta\right)\right)$$
Since $\mathbf{e}(\lambda v'.\lambda v.\mathbf{e}(\mathbf{p}\top \mathbf{r})))$ does not depend on $\alpha$ and it is an element of $\Sigma$, we have 
$$\vdash\mathbf{e}(\lambda v'.\lambda v.\mathbf{e}(\mathbf{p}\top \mathbf{r}))): \lar{\forall} _{\alpha\in \mathbf{W}}\lar{\exists} _{\beta\in \mathbf{W}}\lar{\forall} _{u\in \partial_{0}(\alpha)}\left(\alpha(u)\rightarrow\lar{\forall} _{w\in \partial_{0}(u)}\left(u(w)\rightarrow w\in_{\mathbf{W}}\beta\right)\right)$$
and $\mathbf{W}\vDash \mathbf{Union}$.
\subsubsection{Powerset}
Using Lemma \ref{sub}
$$\left\|\mathbf{Pow}\right\|\equiv_{\Sigma}\lar{\forall} _{\alpha\in \mathbf{W}}\lar{\exists} _{\beta\in \mathbf{W}}\lar{\forall} _{\gamma\in \mathbf{W}}\left(\gamma\subseteq_{\mathbf{W}} \alpha\rightarrow \gamma\in_{\mathbf{W}} \beta\right)$$
Let us consider an arbitrary $\alpha\in \mathbf{W}$ and define $\pi_{\alpha}\in \mathbf{W}$ as that partial function having domain $A^{\partial_0(\alpha)}$ and for which $\pi_{\alpha}(u)=\top$ for every $u$ in the domain. $\pi_\alpha$ is in $\mathbf{W}$ since $\kappa$ is strongly inaccessible. For every $\gamma\in \mathbf{W}$ we also define $\gamma_{\alpha}\in \mathbf{W}$ as follows. The domain of $\gamma_{\alpha}$ is $\partial_{0}(\alpha)\cup \partial_{0}(\gamma)$ and $\gamma_{\alpha}(u):=u\in_{\mathbf{W}}\alpha \times u\in_{\mathbf{W}}\gamma$ for every $u$ in the domain. 

We now use Lemma \ref{not}.
Let $u\in \partial_{0}(\alpha)$. Clearly:
\begin{enumerate}
\item $x: \gamma\subseteq_{\mathbf{W}}\alpha, y: \gamma(u) \vdash xy: u\in_{\mathbf{W}}\alpha$, 
\item $x: \gamma\subseteq_{\mathbf{W}}\alpha, y: \gamma(u) \vdash \mathbf{j}y: u\in \gamma$, 
\item $x: \gamma\subseteq_{\mathbf{W}}\alpha, y: \gamma(u) \vdash \mathbf{\rho}: u=_{\mathbf{W}}u$. 
\end{enumerate}
 From this it follows that 
$$x: \gamma\subseteq_{\mathbf{W}}\alpha, y: \gamma(u) \vdash \mathbf{r}:=\mathbf{p}(\mathbf{p}(xy)(\mathbf{j}y))\mathbf{\rho}: (u\in_{\mathbf{W}}\alpha \times u\in_{\mathbf{W}}\gamma)\times u=_{\mathbf{W}}u$$
and thus that 
$x: \gamma\subseteq_{\mathbf{W}}\alpha, y: \gamma(u) \vdash \mathbf{er}: u\in \gamma_{\alpha}$ from which it follows that 
$$x: \gamma\subseteq_{\mathbf{W}}\alpha\vdash\lambda y.\mathbf{er}: \gamma(u)\rightarrow u\in \gamma_{\alpha}$$
and since $\lambda y.\mathbf{er}$ does not depend on $u\in \partial_{0}(\alpha)$ 
$$x: \gamma\subseteq_{\mathbf{W}}\alpha\vdash\lambda y.\mathbf{er}: \gamma\subseteq_{\mathbf{W}} \gamma_{\alpha}$$
One can also easily show that $\vdash \lambda z.(\mathbf{p}_2z): \gamma_{\alpha}\subseteq_{\mathbf{W}} \gamma$.
Thus 
$$x: \gamma\subseteq_{\mathbf{W}}\alpha\vdash\overline{\mathbf{r}}:=\mathbf{p}\top(\mathbf{p}(\lambda z.(\mathbf{p}_2z))(\lambda y.\mathbf{er})): \top \times \gamma_{\alpha}=_{\mathbf{W}}\gamma$$
Since $\gamma_{\alpha}$ is in the domain of $\pi_\alpha$ we hence have that 
$$x: \gamma\subseteq_{\mathbf{W}}\alpha\vdash\mathbf{e}\overline{\mathbf{r}}: \gamma \in \pi_{\alpha}$$
We can thus conclude that $\vdash\lambda x.\mathbf{e}\overline{\mathbf{r}}:\gamma\subseteq_{\mathbf{W}}\alpha\rightarrow \gamma\in_{\mathbf{W}}\pi_{\alpha}$.
Since $\lambda x.\mathbf{e}\overline{\mathbf{r}}$ and $\mathbf{e}(\lambda x.\mathbf{e}\overline{\mathbf{r}})$ do not depend on $\gamma$ and $\alpha$ we get, 
$$\vdash\mathbf{e}(\lambda x.\mathbf{e}\overline{\mathbf{r}}): \lar{\forall} _{\alpha\in \mathbf{W}}\lar{\exists} _{\beta\in \mathbf{W}}\lar{\forall} _{\gamma\in \mathbf{W}}\left(\gamma\subseteq_{\mathbf{W}} \alpha\rightarrow \gamma\in_{\mathbf{W}} \beta\right)$$
Since $\mathbf{e}(\lambda x.\mathbf{e}\overline{\mathbf{r}})\in \Sigma$, we can conclude that $\mathbf{W}\vDash \mathbf{Pow}$.

\subsubsection{Infinity}
For every $n\in \omega$, we define $\widehat{n}\in \mathbf{W}$ as follows: $\partial_0(\widehat{n})=\{\widehat{m}|\,m<n\}$ and $\widehat{n}(\widehat{m}):=\overline{m}$ where $\overline{m}\in \Sigma$ is Krivine's encoding of the natural number $m$.
We define $\widehat{\omega}$ as the element of $\mathbf{W}$ with domain $\{\widehat{n}|\,n\in \omega\}$ and defined by $\widehat{\omega}(\widehat{n}):=\overline{n}$.

First, if we consider $\widehat{0}=\emptyset$, we can easily see that $\vdash\mathbf{e}(\mathbf{p}\overline{0}\top):\left\|\mathbf{Inf}_1(u)[u]\right\|(\widehat{\omega})$.

Consider now a closed $\lambda$-term $f\in \Sigma$, which exists by a recursion theorem (and which is in $\Sigma$, because $\rho\in \Sigma$), such that 
$$\begin{cases}f\overline{n}\overline{m}\twoheadrightarrow_{\beta}\mathbf{j}_1(\mathbf{p}\overline{m}\rho)\textrm{ if }\overline{m}\neq \overline{n}\\ f\overline{n}\overline{m}\twoheadrightarrow_{\beta}\mathbf{j}_2\rho \textrm{ if }\overline{m}=\overline{n}\end{cases}$$
for every $n,m\in \omega$.

Then, for every $n\in \omega$
$$\vdash \lambda u.f\overline{n}u:\lar{\forall} _{i=0,...,n}(\widehat{n+1}(\widehat{i})\rightarrow (\widehat{i}\in_{\mathbf{W}}\widehat{n}+\widehat{i}=_{\mathbf{W}}\widehat{n}))$$
Moreover
$$\vdash \lambda x.\mathbf{e}(\mathbf{p}x\rho):\widehat{n}\subseteq_{\mathbf{W}}\widehat{n+1}$$
$$\vdash \mathbf{e}(\mathbf{p}\overline{n}\rho):\widehat{n}\in_{\mathbf{W}}\widehat{n+1}$$
Thus $t\leq \left\|\mathbf{Inf}_{2}(u)[u]\right\|(\widehat{\omega})$ for some $t\in \Sigma$, and we can conclude that $\mathbf{W}\vDash \mathbf{Inf}$.

\subsubsection{Separation}
Assume $\varphi\,[\underline{w},x,z]$ be a formula in context with $\underline{w}$ a list of variable of length $n$.
{\small $$\left\|\mathbf{Sep}_{\varphi}\right\|\equiv_{\Sigma}\lar{\forall} _{\underline{\omega}\in \mathbf{W}^{n}}\lar{\forall} _{\alpha\in \mathbf{W}}\lar{\exists} _{\beta\in \mathbf{W}}(
\lar{\forall} _{u\in \partial_{0}(\beta)}(\beta(u)\rightarrow u\in_{\mathbf{W}}\alpha\times \left\|\varphi\,[\underline{w},x,z]\right\|(\underline{\omega},\alpha,u))\times$$ }
$$\lar{\forall} _{u'\in \partial_{0}(\alpha)}(\alpha(u')\rightarrow ( \left\|\varphi\,[\underline{w},x,z]\right\|(\underline{\omega},\alpha,u')\rightarrow u'\in_{\mathbf{W}}\beta)))$$
For an arbitrary $\alpha\in \mathbf{W}$ and $\underline{\omega}\in \mathbf{W}^n$ we define $\alpha_{\varphi}^{\underline{\omega}}\in \mathbf{W}$ as follows: its domain is equal to the domain of $\alpha$, while $\alpha_{\varphi}^{\underline{\omega}}(u):=\alpha(u)\times \left\|\varphi\,[\underline{w},x,z]\right\|(\underline{\omega},\alpha,u)$.
In order to show that $\mathbf{W}\Vdash \mathbf{Sep}_{\varphi}$, it is sufficient to find a $t\in \Sigma$ not depending on $\overline{\omega}$ and $\alpha$ such that 
$$\vdash t: \lar{\forall} _{u\in \partial_{0}(\alpha)}(\alpha_{\varphi}^{\overline{\omega}}(u)\rightarrow u\in_{\mathbf{W}}\alpha\times \left\|\varphi\,[\underline{w},x,z]\right\|(\underline{\omega},\alpha,u))\times$$ 
$$\lar{\forall} _{u'\in \partial_{0}(\alpha)}(\alpha(u')\rightarrow ( \left\|\varphi\,[\underline{w},x,z]\right\|(\underline{\omega},\alpha,u')\rightarrow u'\in_{\mathbf{W}}\alpha_{\varphi}^{\overline{\omega}}))$$
But this is immediate to prove, since using Lemma \ref{not}
$$\vdash\lambda x.\mathbf{p}(\mathbf{j}(\mathbf{p}_1x))(\mathbf{p}_2x): \lar{\forall} _{u\in \partial_{0}(\alpha)}(\alpha_{\varphi}^{\overline{\omega}}(u)\rightarrow u\in_{\mathbf{W}}\alpha\times \left\|\varphi\,[\underline{w},x,z]\right\|(\underline{\omega},\alpha,u))$$
$$\vdash\lambda x.\lambda y.\mathbf{e}(\mathbf{p}(\mathbf{p}xy)\mathbf{\rho}): \lar{\forall} _{u'\in \partial_{0}(\alpha)}(\alpha(u')\rightarrow ( \left\|\varphi\,[\underline{w},x,z]\right\|(\underline{\omega},\alpha,u')\rightarrow u'\in_{\mathbf{W}}\alpha_{\varphi}^{\overline{\omega}}))$$

\subsubsection{$\in$-Induction}
Let $\mathbf{y}$ be the fix-point operator such that $\mathbf{y}f$ $\beta$-reduces to $f(\mathbf{y}f)$ for every $f$ and consider 
$$\mathbf{h}:=\mathbf{y}(\lambda h.\lambda x.x(\lambda y.hx))\in \Sigma$$
in such a way that $\mathbf{h}\leq (\lambda h.\lambda x.x(\lambda y.hx))\mathbf{h}\leq \lambda x.x(\lambda y.\mathbf{h}x)$.

We want to prove that $\mathbf{h}\leq \left\| \in\textrm{-}\mathbf{Ind}_{\varphi}\right\|$ to conclude that $\mathbf{W}\vDash \in\textrm{-}\mathbf{Ind}_{\varphi}$.
%METTERE CONTESTO SU phi

We prove this by induction on the rank in $\mathbf{W}$. Fix an arbitrary $\overline{\alpha}$ and assume that 
$$\mathbf{h}\leq \lar{\forall} _{\alpha\in \mathbf{W}}\left(\lar{\forall} _{u\in \partial_{0}(\alpha)}(\alpha(u)\rightarrow \left\|\varphi[x]\right\|(u))\rightarrow \left\|\varphi[x]\right\|(\alpha)\right)\rightarrow \left\|\varphi[x]\right\|(\beta)$$ for every $\beta$ with rank strictly less than that of $\overline{\alpha}$.
Let us use $\varepsilon^{\alpha}$ as a shorthand for $\lar{\forall} _{u\in \partial_{0}(\alpha)}(\alpha(u)\rightarrow \left\|\varphi[x]\right\|(u))\rightarrow \left\|\varphi[x]\right\|(\alpha)$ and $\varepsilon$ as a shorthand for $\lar{\forall} _{\alpha\in \mathbf{W}}\varepsilon^{\alpha}$.

If we consider the following derivation tree:

$$\cfrac{\cfrac{\cfrac{x:\varepsilon\vdash x:\varepsilon}{x:\varepsilon\vdash x:\varepsilon^{\overline{\alpha}}}\qquad \cfrac{\cfrac{\cfrac{\cfrac{\cfrac{\vdash \mathbf{h}:\varepsilon\rightarrow  \left\|\varphi\,[x]\right\|(u)\,(\textrm{ for every }u\in \partial_{0}(\overline{\alpha}))}{x: \varepsilon\vdash \mathbf{h}:\varepsilon\rightarrow  \left\|\varphi\,[x]\right\|(u)(\textrm{ for every }u\in \partial_{0}(\overline{\alpha}))}\qquad x:\varepsilon\vdash x:\varepsilon}{x:\varepsilon\vdash \mathbf{h}x: \left\|\varphi\,[x]\right\|(u)(\textrm{ for every }u\in \partial_{0}(\overline{\alpha}))}}{x:\varepsilon,y: \overline{\alpha}(u)\vdash \mathbf{h}x: \left\|\varphi\,[x]\right\|(u)(\textrm{ for every }u\in \partial_{0}(\overline{\alpha}))}}{x:\varepsilon\vdash \lambda y.\mathbf{h}x: \overline{\alpha}(u)\rightarrow \left\|\varphi\,[x]\right\|(u)(\textrm{ for every }u\in \partial_{0}(\overline{\alpha}))}}{x:\varepsilon\vdash \lambda y.\mathbf{h}x: \lar{\forall} _{u\in \partial_{0}(\overline{\alpha})}(\overline{\alpha}(u)\rightarrow \left\|\varphi\,[x]\right\|(u))}}{x:\varepsilon\vdash x(\lambda y.\mathbf{h}x):\left\|\varphi[x]\right\|(\overline{\alpha})}}{\vdash \lambda x.x(\lambda y.\mathbf{h}x):\varepsilon\rightarrow \left\|\varphi[x]\right\|(\overline{\alpha})}$$

we can conclude that $\mathbf{h}\leq \varepsilon\rightarrow \left\|\varphi[x]\right\|(\overline{\alpha})$.

%\section{Implicative-valued model of $\mathbf{IZF}$ in presence of compatibility with joins}
%In the case of an implicative algebra which is compatible with joins, arbitrary joins satisfy the following rules
%$$\cfrac{\Gamma\vdash t:a_i}{\Gamma\vdash t:\bigvee_{i\in I}a_i}\,i\in I\qquad \cfrac{\Gamma\vdash s:\bigvee_{i\in I}a_i\qquad \Gamma, x:a_i\vdash t:b\,(\textrm{for all }i\in I)}{\Gamma\vdash(\lambda x.t)s:b}$$ 
%Moreover it is easy, by using the rules, to see that 
%$$\mathbf{w}_1:=\lambda x.x(\lambda y.y)\leq \exists_{i\in I}a_{i}\rightarrow \bigvee_{i\in I}a_i$$
%$$\mathbf{w}_2:=\lambda x.(\lambda y.\mathbf{e}y)x\leq \bigvee_{i\in I}a_i\rightarrow \exists_{i\in I}a_{i}$$
%for every family $(a_i)_{i\in I}$ of elements of $A$.
%Exploiting this fact we can show that the model defined in the previous section validates also the axiom schema of collection.
\subsubsection{Collection}
In order to lighten the notation we will consider $\mathbf{Col}_{\varphi}$ for a formula $\varphi$ in context $[x,y]$ (so without any additional parameter). Moreover we will write $\varphi(a,b)$ instad of $\left\|\varphi\,[x,y]\right\|(a,b)$.

Assume $\alpha\in \mathbf{W}$ and $u\in\partial_0(\alpha)$. Since $\kappa$ in inaccessible, $|A|<\kappa$ and $\{\varphi(u,\gamma)|\,u\in \mathbf{W}\}\subseteq A$, there exists $\eta<\kappa$ such that $\lar{\exists} _{\gamma\in \mathbf{W}}(\top\times \varphi(u,\gamma))=\lar{\exists} _{\gamma\in W^{\mathbb{A}}_{\eta}}(\top \times \varphi(u,\gamma))$. We define $\eta_{u}$ to be the minimum such an $\eta$ and we define $\overline{\eta}_{\alpha}:=\bigvee\{\eta_u|\,u\in \partial_{0}(\alpha)\}$ which is strictly less than $\kappa$, since the cardinality of $A$ is strictly less than $\kappa$. 
We define $\beta_{\alpha}\in \mathbf{W}$ as the constant function with value $\top$ and domain $W^{\mathbf{A}}_{\overline{\eta}_{\alpha}}$.  Using the calculus we can show that there is an element $r\in \Sigma$ not depending on $\alpha$ such that 
$$\vdash r:\lar{\forall} _{u\in \partial_{0}(\alpha)}\left(\alpha(u)\rightarrow \lar{\exists} _{\gamma\in \mathbf{W}}\varphi(u,\gamma)\right)\rightarrow$$
$$\qquad\qquad\qquad\qquad\qquad\qquad\qquad \lar{\forall} _{u\in \partial_{0}(\alpha)}\left(\alpha(u)\rightarrow \lar{\exists} _{w\in\partial_{0}(\beta_{\alpha})}(\beta_{\alpha}(w)\times\varphi(u,w))\right)$$
and using this fact one can easily show that $\mathbf{Col}_{\varphi}$ is validated in the model.

\section{Models of $\mathbf{IZF}$ in a class of toposes}
\begin{theorem} Every topos $\mathcal{E}$ obtained from an implicative tripos by means of the tripos-to-topos construction from an implicative algebra $\mathbb{A}=(A,\leq,\rightarrow,\Sigma)$ with $|A|<\kappa$ for a strongly inaccassible cardinal $\kappa$ hosts a model of $\mathbf{IZF}$. If $\Sigma$ is classical (see \cite{miq1}), then $\mathcal{E}$ hosts a model of $\mathbf{ZF}$.
%Every topos which is the result of a tripos-to-topos construction starting from a set-based tripos contains a model of $\mathbf{IZF}$. In particular, if the tripos is boolean, then we get a model of $\mathbf{ZF}$ in the corresponding topos. 
\end{theorem}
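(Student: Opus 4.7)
The plan is to transfer the syntactic validity results established throughout Section 4 into an honest internal model of $\mathbf{IZF}$ living in the topos $\mathcal{E}=\mathbf{Set}[\mathbf{p}_{\mathbb{A}}]$. First I would show that $(\mathbf{W}, =_{\mathbf{W}})$ is an object of $\mathcal{E}$: the map $=_{\mathbf{W}}\in A^{\mathbf{W}\times\mathbf{W}}$ represents a predicate in $\mathbf{p}_{\mathbb{A}}(\mathbf{W}\times\mathbf{W})$ which is symmetric (via $\mathbf{\sigma}$ from Lemma \ref{not}) and transitive (combining $\mathbf{\sigma}$ with $\mathbf{s}_3$), hence a partial equivalence relation in the tripos. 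Lemma \ref{subfor} applied to the atomic formulas $x\in y$ and $x=y$ (whose realizers ultimately rely on $\mathbf{s}_1$ and $\mathbf{s}_2$) guarantees that the predicate $\in_{\mathbf{W}}\in A^{\mathbf{W}\times\mathbf{W}}$ is compatible with $=_{\mathbf{W}}$ on both coordinates, so it defines a subobject of $(\mathbf{W},=_{\mathbf{W}})\times(\mathbf{W},=_{\mathbf{W}})$ in $\mathcal{E}$.

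Next I would match the bespoke interpretation $\|\cdot\|$ of Section~4 against the canonical internal-language interpretation in $\mathcal{E}$ of the structure $((\mathbf{W},=_{\mathbf{W}}),\in_{\mathbf{W}})$. Clause by clause this is immediate from the tripos construction: atomic formulas are sent to $=_{\mathbf{W}}$ and $\in_{\mathbf{W}}$; the connectives $\wedge,\vee,\rightarrow$ correspond to the Heyting prealgebra operations $\times,+,\rightarrow$ on each fibre; and quantifiers over $\mathbf{W}$ correspond to $\lar{\forall}$ and $\lar{\exists}$, which implement the left and right adjoints to reindexing along projections $\mathbf{W}^{n+1}\to\mathbf{W}^n$. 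Consequently ``$\mathbf{W}\vDash\varphi$'' (the statement that $\|\varphi\|$ lies in the top class of $\mathbf{p}_{\mathbb{A}}(\mathbf{W}^n)$) is the same thing as ``$\varphi$ holds in $\mathcal{E}$'s internal logic, relativized to $((\mathbf{W},=_{\mathbf{W}}),\in_{\mathbf{W}})$''. Combining this identification with the validity proofs of $\mathbf{Emp}$, $\mathbf{Ext}$, $\mathbf{Pair}$, $\mathbf{Union}$, $\mathbf{Pow}$, $\mathbf{Inf}$, $\mathbf{Sep}_{\varphi}$, $\mathbf{Col}$ and $\in\textrm{-}\mathbf{Ind}_{\varphi}$ carried out in the preceding subsections yields an internal model of $\mathbf{IZF}$ in $\mathcal{E}$.

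For the classical addendum, assume $\Sigma$ is classical in the sense of \cite{miq1}, so that a realizer for Peirce's law belongs to $\Sigma$; the associated tripos then has Boolean fibres. A straightforward induction on the complexity of closed set-theoretic formulas, using the calculus of Section~3 and the combinator for Peirce's law to handle the implication/negation cases, produces a realizer in $\Sigma$ for $\|\varphi\vee\neg\varphi\|$ for every statement $\varphi$. The internal logic of the $(\mathbf{W},\in_{\mathbf{W}})$-model thus satisfies excluded middle in addition to the $\mathbf{IZF}$ axioms already verified, and since $\mathbf{IZF}$ together with classical logic entails $\mathbf{ZF}$ (in particular $\in$-induction becomes the Foundation axiom), $\mathcal{E}$ hosts a model of $\mathbf{ZF}$.

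The main obstacle is the second step: rigorously identifying the combinator-flavoured interpretation $\|\cdot\|$ with the tripos-based internal language of $\mathcal{E}$. The mathematics is shallow but the bookkeeping is delicate, because one must work up to $=_{\mathbf{W}}$ rather than on the nose when interpreting substitution and quantification, and this is precisely where the extensionality realizers of Lemma~\ref{subfor} must be invoked at every step of the translation. Everything else is an assembly of results already established earlier in the paper.
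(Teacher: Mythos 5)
Your proposal is correct and follows essentially the same route as the paper: the paper's own proof is a two-line assertion that the object $(\mathbf{W},[=_{\mathbf{W}}])$ together with the subobject determined by $((x,y),(x',y'))\mapsto x\in_{\mathbf{W}}y\times x=_{\mathbf{W}}x'\times y=_{\mathbf{W}}y'$ gives the internal model, leaving to the reader exactly the points you spell out (that $=_{\mathbf{W}}$ is a partial equivalence relation via Lemma \ref{not}, that $\in_{\mathbf{W}}$ is extensional via $\mathbf{s}_1,\mathbf{s}_2$, and that $\|\cdot\|$ agrees with the internal-language interpretation so that the validity results of Section 4 transfer). Your treatment of the classical addendum, which the paper does not address at all, is also fine, though the induction on formula complexity is unnecessary: a classical separator yields a uniform realizer of $\bigwedge_{a\in A}\left(a+(a\rightarrow\bot)\right)$, which can be instantiated at $\left\|\varphi\right\|(\underline{\alpha})$ directly.
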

\begin{proof}
In such a topos $\mathcal{E}$ an internal model of $\mathbf{IZF}$ is given by the object $(\mathbf{W},[=_{\mathbf{W}}])$ together with the mono embedding the object $$(\mathbf{W}\times \mathbf{W},[((x,y),(x',y'))\mapsto x\in_{\mathbf{W}}y \times x=_{\mathbf{W}}x' \times y=_{\mathbf{W}}y'])$$ into $(\mathbf{W},[=_{\mathbf{W}}])\times (\mathbf{W},[=_{\mathbf{W}}])$ which inteprets the membership relation.
\end{proof}
Using Theorem \ref{teomiq}, we obtain the following:
\begin{corollary} If for every cardinal $\kappa'$ there exists a strongly inaccessible cardinal $\kappa$ such that $\kappa'<\kappa$ (i.e. if the inaccessible cardinal axiom holds), then every topos obtained from a set-based tripos by means of the tripos-to-topos construction hosts a model of $\mathbf{IZF}$.
\end{corollary}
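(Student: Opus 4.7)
The plan is to reduce the corollary directly to the preceding theorem via Miquel's representation result. Given a set-based tripos $\mathbf{p}:\mathbf{Set}^{op}\rightarrow \mathbf{Heyt}$, Theorem \ref{teomiq} supplies an implicative algebra $\mathbb{A}=(A,\leq,\rightarrow,\Sigma)$ together with an isomorphism $\mathbf{p}\cong \mathbf{p}_{\mathbb{A}}$. In particular, the set $A$ has some cardinality $|A|$, which is just a fixed cardinal of the metatheory.

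Next, I invoke the inaccessible cardinal axiom with $\kappa':=|A|$ to produce a strongly inaccessible cardinal $\kappa>|A|$. This puts us in exactly the hypothesis of the theorem of the previous section: we have an implicative algebra $\mathbb{A}$ whose underlying set has cardinality strictly less than a strongly inaccessible cardinal $\kappa$. Applying that theorem to $\mathbb{A}$ yields an internal model of $\mathbf{IZF}$ in the topos $\mathbf{Set}[\mathbf{p}_{\mathbb{A}}]$ built as the object $(\mathbf{W},[=_{\mathbf{W}}])$ together with the interpretation of membership described in the proof of that theorem.

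Finally, I transport this model along the isomorphism $\mathbf{p}\cong \mathbf{p}_{\mathbb{A}}$. The tripos-to-topos construction is functorial in isomorphisms of triposes over $\mathbf{Set}$, so the induced functor $\mathbf{Set}[\mathbf{p}_{\mathbb{A}}]\to \mathbf{Set}[\mathbf{p}]$ is an equivalence of toposes; since being an internal model of a geometric/first-order theory is preserved by equivalence of toposes, the image of $(\mathbf{W},[=_{\mathbf{W}}])$ together with the transported membership mono gives an internal model of $\mathbf{IZF}$ in $\mathbf{Set}[\mathbf{p}]$.

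The only real content is the invocation of Theorem \ref{teomiq} combined with the previous theorem; the main subtlety is checking that transporting the internal model along the tripos-to-topos equivalence preserves the validity of the $\mathbf{IZF}$-axioms, which however is immediate once one recalls that the tripos-to-topos construction is $2$-functorial and that the axioms of $\mathbf{IZF}$ are expressed entirely in the internal language of the resulting topos. No further verification of axioms is required, since all the work was already carried out in the proof of the theorem for $\mathbf{p}_{\mathbb{A}}$.
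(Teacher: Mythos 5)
Your proposal is correct and follows exactly the route the paper intends: apply Theorem \ref{teomiq} to replace the given tripos by an isomorphic implicative tripos, use the inaccessible cardinal axiom to find $\kappa>|A|$, invoke the theorem of the previous section, and transport the internal model along the induced equivalence of toposes (which preserves the full internal logic, not just geometric fragments, so the transport step is unproblematic). The paper leaves all of this implicit, so your write-up is simply a more explicit version of the same argument.
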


\subsubsection*{Acknowledgements} The author would like to acknowledge T. Streicher and F.Ciraulo for useful discussions on the subject of this paper.

\bibliographystyle{alpha}
\bibliography{biblioPAS}

\begin{thebibliography}{Miq20b}

\bibitem[Bel11]{bell}
John~L. Bell.
\newblock {\em Set theory}, volume~47 of {\em Oxford Logic Guides}.
\newblock Oxford University Press, Oxford, 2011.
\newblock Boolean-valued models and independence proofs, With a foreword by
  Dana Scott, Paperback of the third (2005) edition.

\bibitem[Fri73]{Friedman}
Harvey Friedman.
\newblock Some applications of {K}leene's methods for intuitionistic systems.
\newblock In {\em Cambridge {S}ummer {S}chool in {M}athematical {L}ogic
  ({C}ambridge, 1971)}, Lecture Notes in Math., Vol. 337, pages 113--170.
  Springer, Berlin, 1973.

\bibitem[HJP80]{HJP}
J.~M.~E. Hyland, P.~T. Johnstone, and A.~M. Pitts.
\newblock Tripos theory.
\newblock {\em Math. Proc. Cambridge Philos. Soc.}, 88(2):205--231, 1980.

\bibitem[McC84]{McCarty}
D.C. McCarty.
\newblock Realizability and recursive mathematics.
\newblock 1984.

\bibitem[Miq20a]{miq1}
Alexandre Miquel.
\newblock Implicative algebras: a new foundation for realizability and forcing.
\newblock {\em Math. Structures Comput. Sci.}, 30(5):458--510, 2020.

\bibitem[Miq20b]{miq2}
Alexandre Miquel.
\newblock Implicative algebras ii: completeness w.r.t. set-based triposes,
  2020.

\bibitem[MS15]{MSt}
Samuele Maschio and Thomas Streicher.
\newblock Models of intuitionistic set theory in subtoposes of nested
  realizability toposes.
\newblock {\em Ann. Pure Appl. Logic}, 166(6):729--739, 2015.

\bibitem[Ros82]{Rosolini}
G~Rosolini.
\newblock Un modello per la teoria intuizionista degli insiemi.
\newblock In {\em Atti degli Incontri di Logica Matematica}. 1982.

\bibitem[vO08]{VOO08}
J.~van Oosten.
\newblock {\em Realizability: an introduction to its categorical side}, volume
  152 of {\em Studies in Logic and the Foundations of Mathematics}.
\newblock Elsevier B. V., Amsterdam, 2008.

\end{thebibliography}

\end{document}